\newtheorem{thm}{Theorem}[section]
\newtheorem{defi}[thm]{Definition}
\newcommand{\R}{\mathbb{R}}
\newcommand{\C}{\mathbb{C}}
\renewcommand{\P}{\mathbb{P}}
\newcommand{\E}{\mathbb{E}}
\newcommand{\ind}{\text{\usefont{U}{bbold}{m}{n}1}} 
\newcommand{\F}{\mathcal{F}}
\newcommand{\eps}{\varepsilon}
\numberwithin{equation}{section}
\def\journal@id{}
\def\journal@name{}
\def\journal@url{}
\begin{document}

\begin{frontmatter}

\title{Regenerative Properties of the Linear Hawkes Process with Unbounded Memory}
\runtitle{Regenerative Properties of the Hawkes Process}


\author{\fnms{Carl} \snm{Graham}\corref{}\ead[label=e1]{carl.graham@polytechnique.edu}}
\address{CMAP, \'Ecole polytechnique \\ 91128 Palaiseau, France\\ \printead{e1}}
\affiliation{\'Ecole polytechnique, CNRS, IP Paris}

\runauthor{Carl Graham}

\begin{abstract}
We prove regenerative properties for the linear Hawkes process under minimal assumptions on the transfer function,
which may have unbounded support. These results are applicable to sliding window statistical estimators.
We exploit independence in the Poisson cluster point process decomposition, and the regeneration times are not
stopping times for the Hawkes process. 
The regeneration time is interpreted as the renewal time at zero of a M/G/infinity queue,
which yields a formula for its Laplace transform.
When the transfer function admits some exponential moments, we stochastically dominate the cluster length by 
exponential random variables with parameters expressed in terms of these moments. This yields
explicit bounds on the Laplace transform  of the regeneration time in terms of simple integrals or
special functions yielding an explicit negative upper-bound 
on its abscissa of convergence.
These regenerative results allow, \emph{e.g.},  to systematically derive long-time asymptotic results
 in view of statistical applications.
This is illustrated on a concentration inequality previously obtained with coauthors.
 \end{abstract}

\begin{keyword}[class=MSC]
\kwd[Primary ]{60G55}
\kwd{}
\kwd[; secondary ]{60K05, 62M09, 44A10}
\end{keyword}

\begin{keyword}
\kwd{regenerative processes}
\kwd{Poisson cluster point processes}
\kwd{infinite-server queues}
\kwd{long-time asymptotics}
\kwd{Laplace transforms}
\kwd{concentration inequalities}
\end{keyword}

\end{frontmatter}

\section{Introduction}

\subsection{Background}
 
Hawkes~\cite{hawkes} introduced the point process on the real line bearing his name in order to model earthquakes. 
Primary shocks arrive with a constant intensity and generate cascades of aftershocks, each shock generating 
direct aftershocks after durations given by an inhomogeneous Poisson process with a fixed intensity function
 in i.i.d.\ fashion. In mathematical terms, the conditional intensity of the point process of the instants 
 of shocks is the sum of the primary shock arrival intensity and of  time-shifts of the intensity function 
 to the instants of past shocks. Thus, the intensity function must be non-negative, and this can only be used to model self-excitation effects.

Hawkes and Oakes~\cite{hawkesoakes} exploited this additive structure to provide a Poisson cluster point process
decomposition of the Hawkes process. It allowed them to prove the existence of a stationary version 
under a sub-criticality assumption on the cascade of aftershocks, 
and has since shown itself to be a powerful tool in the study of Hawkes processes.

Br\'emaud and Massouli\'e~\cite{bremaudmassoulie} generalized this to phenomena in which  
the response to the above sum is modulated by an excitation function. They studied a point process $N$
satisfying an initial condition on $(-\infty,0]$ and with conditional intensity on $(0,\infty)$ given
for a transfer function $h: (0,\infty) \to \R$ and an excitation function $\phi: \R \to \R_+$ by
\begin{equation}
\label{eq-intens-nonlin}
t \in (0,\infty) \mapsto
\phi\biggl(\int_{(-\infty,t)} h(t-s)\,N(\mathrm{d}s)\biggr)
\triangleq \phi\Biggl(\sum_{s \in N, s <t}h(t-s)\Biggr)\,.
\end{equation}
The process in Hawkes~\cite{hawkes} corresponds to 
$h\ge0$ and $\phi (x) = \lambda + \alpha x$ for $x\ge0$ for constants 
$\lambda \ge0$ and $\alpha\ge0$, and this case is called a linear Hawkes process 
and the general case a nonlinear Hawkes process if needed.

If $\phi$ is non-decreasing then the positive values of $h$ can be interpreted as self-excitation and the negative ones 
as self-inhibition, and $\phi$ modulates the response to the superposition of the effects 
of  previous points. This allows to model phenomena featuring both self-excitation and self-inhibition,
which appear in many applicative fields such as seismology~\cite{hawkesadamopoulos,ogata88},
finance~\cite{bacrydelattrehoffmannmuzySPA, bacrydelattrehoffmannmuzy,bacrymuzy2016,Thibault-Rosenbaum2015, Thibault-Rosenbaum2016},
genetics~\cite{reynaudbouretschbath}, or 
neurosciences~\cite{chevalliercaceresdoumicreynaudbouret, ditlevsenlocherbach, reynaudbouretrivoirardtuleaumalot}.
Already in~\cite{bremaudmassoulie} a neuron network was modeled by an interacting system of Hawkes processes,
which provide interesting models such as those 
in~\cite{chevallier,delattrefournierhoffmann,delattrefournier,ditlevsenlocherbach, duartelocherbachost}.

Nonlinear Hawkes processes are much harder to analyze than linear ones, in particular no equivalent of the 
Poisson cluster point process decomposition is known. 
If $\phi$ is non-decreasing and $h$ is non-negative then the natural monotonicity properties 
of the process help,  but considering $h$ taking negative values is much harder.
This is obvious in~\cite{bremaudmassoulie}, 
where the existence and sometimes the uniqueness and attractivity of a stationary version 
of the nonlinear Hawkes process is proved under various sets of assumptions. 

An assumption which appears in many papers of the field and simplifies several proofs in~\cite{bremaudmassoulie}
is that the response function $\phi$ is bounded, another one is that the transfer function $h$ has bounded support 
and thus the Hawkes process has bounded memory. Both are restrictive for applications.

The mathematical analysis of the models and the development, calibration, 
and validation of efficient statistical tools is an important issue.  For instance
Reynaud-Bouret and Roy~\cite{reynaudbouretroy} 
obtained exponential concentration inequalities for ergodic theorems for the linear Hawkes process with bounded
memory, in which necessarily $h\ge0$ (pure self-excitation),
using a coupling similar to what can be found in Berbee \cite{berbee}. 

In case of transfer functions $h$ which may take negative values (self-inhibition), similar results were obtained in 
Costa~\emph{et al.}~\cite{costa2018renewal} by coauthors and myself under the bounded memory assumption
that $h$ has bounded support. The response function was taken to be $\phi:  x\mapsto x^+$ for simplicity of exposition 
but could be generalized.
The bounded memory assumption was crucial in representing the Hawkes process 
by a  Markov process and proving that it has a positive recurrent state.
This yielded long time limit theorems for functional statistical estimators of interest by deriving them
from corresponding limit theorems for i.i.d.\ sequences. 
Notably exponential concentration inequalities were obtained from Bernstein's inequality
using highly intricate renewal computations.

\subsection{Scope of the Paper}

A filtered probability space $(\Omega,\F,(\F_t)_{t\ge0},\P)$ satisfying the usual assumptions and 
supporting all required random elements is given. All processes will be adapted.

The present paper focuses on linear Hawkes process with unbounded memory, 
\emph{i.e.}, with transfer functions with unbounded support,
for which the techniques in~\cite{costa2018renewal} do not apply. 
Even though our main tool is the Poisson cluster point process decomposition,
which generalizes to interacting systems and to marked linear Hawkes processes, we consider the
one-dimensional case so as to focus on the main difficulties and ideas. 
The methods would generalize under adequate assumptions. 

We aim in particular to provide long-time limit results applicable to an important class
of functional statistical estimators involving a sliding window of fixed width~$A$, and introduce
the notion of $A$-regeneration which is natural in this context.
The notation of the explicit dependence on $A$ is often omitted in what follows.

The main contribution of the paper is to exhibit regeneration times for the linear Hawkes process 
and to study them in a precise fashion, under the minimal assumptions $\int h(t)\,\mathrm{d}t  <1$
and $\int t h(t)\,\mathrm{d}t  <\infty$ and a mild condition on the initial condition ensuring that its 
influence eventually vanishes.
The Laplace transform of the regeneration time is expressed in terms of the transfer function $h$.
When $\int \mathrm{e}^{\theta t} h(t)\,\mathrm{d}t  <\infty$ for some $\theta>0$,
the regeneration time is stochastically dominated by random variables with explicit 
Laplace transforms depending on a parameter computable in terms of the Laplace transform of $h$,
which yields explicit exponential moment bounds. 

The main idea is as follows. We construct the Hawkes process using $\F_0$-measurable random 
elements in order to take into account the influence on $(0,\infty)$ of the initial condition,
and a $(\F_t)_{t\ge0}$-Poisson cluster point process 
to add further points on $(0,\infty)$ similarly to the  decomposition of Hawkes and Oakes~\cite{hawkesoakes}.
We then exhibit regeneration times by exploiting the $(\F_t)_{t\ge0}$-strong Markov property of the 
Poisson cluster point process reflecting its  independence properties.
The regeneration times are stopping times for $(\F_t)_{t\ge0}$ but not in general for the Hawkes process. 
Intuitively, the construction allows us to peek ahead of times $t$ in a $\F_t$-measurable way 
and thus to be able to detect such  times at which the past will not  influence the 
future; \cite[p.1570]{bremaudmassoulie} writes
``Note that an unbounded support for the function $h$ makes the memory always infinite, 
and regenerative arguments do not come up naturally, although they may exist'', and we  found one.

Regenerative proprieties open up the toolbox of coupling techniques and of adapting results for 
i.i.d.~sequences by partitioning the process into an initial delay and an independent i.i.d.~sequence of cycles.
In particular the proofs and results in~\cite{costa2018renewal} can be reproduced
here without the bounded memory assumption that the transfer function $h$ has bounded support, 
but here necessarily $h$ is non-negative and covers self-excitation cases only
whereas~\cite{costa2018renewal} allows for $h$ taking negative values and covers self-inhibition cases. 

Good bounds on the regeneration times are essential for applications. 
As in~\cite{costa2018renewal}, we interpret the regeneration time as the renewal time at the empty state
of a $M/G/\infty$ queue with service time given by the sum of the cluster length and  the window width~$A$, 
and use a formula of Tak\`acs \cite{Takacs1956,Takacs1962} expressing the Laplace transform of this renewal time
in terms of the service time c.d.f. 
After this, \cite{costa2018renewal} establishes a general result on exponential tails of $M/G/\infty$ 
renewal times at zero for service times with exponential tails, which they implement for the Hawkes process
using the somewhat loose bound on the cluster length obtained in~\cite[Sect.~1.1]{reynaudbouretroy}
in terms of the exponential moments of $h$.

Here we seek better and more explicit bounds on the cluster length and renewal time.
Surprisingly, studies of super-critical branching random walks
abound, see~\cite{Shi2015,Hu2016,Barral-etal2018,Mallein2018}, \emph{e.g.},
but there is very little recent work on the sub-critical case. 
M\"oller and Rasmussen~\cite{Moller-Rasmussen2005} obtained results based on a formula in 
Hawkes and Oakes~\cite{hawkesoakes} which allow us to stochastically dominate the cluster length 
by an exponential random variable with parameter computable in terms of 
the exponential moments of  $h$.

This yields that the regeneration time is stochastically dominated by the renewal time at zero
of the $M/G/\infty$ queue with service given by the sum of the dominating exponential random variable
and of $A$, which yields explicit computable bounds. Notably, this yields
explicit bounds on the Laplace transform  of the regeneration time in terms of simple integrals or
special functions appearing in like studies for M/M/infinity queues, 
which yield an explicit negative upper-bound 
on its abscissa of convergence.

These regenerative results allow to systematically derive long-time asymptotic results on the Hawkes process,
in particular applicable to sliding window statistical estimators. In this context,
Costa~\emph{et al.}~\cite[Thm~1.5]{costa2018renewal} thus establish 
exponential concentration inequalities using highly intricate computations 
which can be reproduced~\emph{verbatim}
here. We demonstrate the usefulness of the precise explicit bounds we have obtained for the regeneration time
on the simplified version~\cite[Cor.~1.6]{costa2018renewal} of the concentration
inequality and thus provide explicit non-asymptotic exponential bounds. 

\subsection{Notation and Organization}

Section~\ref{sec-cluster-constr} builds from the Poisson cluster point process
the processes which will be used to state and prove regeneration.
Section~\ref{sec-regeneration} defines the notion of $A$-regeneration and then exhibits 
a sequence of regeneration times.
Section~\ref{sec-main-results-times} interprets the regeneration time in terms
of a $M/G/\infty$ queue, uses this to obtain its Laplace transform, 
and  stochastically dominates  it to obtain  explicit bounds.
Section~\ref{sec-applications} discusses applications to limit theorems  for a wide class
of functional statistical estimators, and illustrates the explicit bounds on a concentration inequality. 
Appendix~\ref{appendix-estim}  discusses how to approximate other classes of useful statistical estimators
by the previous class.
Appendix~\ref{appendix-queue}  lists  formulæ on the renewal times and busy periods of
$M/G/\infty$ and $M/M/\infty$ queues. 

When $B$ is a Borel subset of $\R$, we denote by  $\mathcal{N}(B)$ [resp.~$\mathcal{N}_b(B)$]
the space of boundedly finite  [resp.~finite] counting measures, in particular if $B$ is bounded then 
$\mathcal{N}(B) = \mathcal{N}_b(B)$.
There may be abuse of notation 
 in which the restriction $\mu|_B\in \mathcal{N}(B)$ of $\mu \in \mathcal{N}(\R)$ is identified with
 $\mu\ind_B\in \mathcal{N}(\R)$ and reciprocally in which $\mu \in \mathcal{N}(B)$ is identified with
 its extension to a measure on $\R$ using the null measure on $\R\setminus B$.
The shift operator group $(S_t)_{t\in\R}$ satisfies
(with $B-t = \{x-t : x\in B\}$, etc.)
\begin{equation}
\label{shift}
\left\{
\begin{aligned}
&S_t : \mu \in \mathcal{N}(B) \mapsto S_t\mu \in \mathcal{N}(B-t)\,,
\\
&\int_{B-t} f(x)\, S_t \mu(\mathrm{d}x) \triangleq \int_B f(x-t)\, \mu(\mathrm{d}x)\,,
&&f\in \mathcal{B}_b(B-t)\,,
\\
&S_t \mu(C) = \mu (C+t)\,, 
&&C\in\mathcal{N}(B-t)\,.
\end{aligned}
\right.
\end{equation}

Point processes are considered as random variables with values in some specified $\mathcal{N}(B)$,
sometimes with abuse of notation on $\mathcal{N}(\R)$ with null extension outside $B$.
They may be identified with the random set of their atoms since they always are simple, or
with (appropriately started) counting processes in order to use results for classic c\`adl\`ag processes with values in a Polish space.
Daley and Vere-Jones~\cite{daleyverejones2003,daleyverejones2008} is a reference book
on this topic, in which $\mathcal{N}(B)$ [resp.~$\mathcal{N}_b(B)$] are denoted by 
$\mathcal{N}_B^\#$ [resp.~$\mathcal{N}_B$].

\section{Hawkes Process and Poisson Cluster Point Process}
\label{sec-cluster-constr}           

From now on we investigate the linear Hawkes Process satisfying~\eqref{eq-intens-nonlin}
for linear (actually affine) excitation functions $\phi$ and (necessarily) non-negative transfer functions $h$.
We are specifically interested in the case that  $h$ has unbounded support and hence the process 
has unbounded memory.
We often drop the adjective ``linear'' in the following.

\begin{defi}[Hawkes process]
\label{def:Hawkes}
Let a constant $\lambda\ge0$, an integrable function $h: (0,\infty) \to \R_+$, 
and a $\F_0$-measurable boundedly finite point process $N^\mathrm{in}$ on $(-\infty,0]$ be given.
The point process $N$ on $\R$ is a \emph{(linear) Hawkes process} on $(0,\infty)$ with 
\emph{initial condition} $N^\mathrm{in}$, \emph{base intensity} $\lambda$, and \emph{transfer function} $h$ if 
\[
N|_{(-\infty,0]}=N^\mathrm{in}
\]
 and the conditional intensity measure of $N|_{(0,\infty)}$ 
is absolutely continuous w.r.t.\ the Lebesgue measure with density
\begin{equation*}
 t\in (0,\infty) \mapsto \lambda + \int_{(-\infty,t)} h(t-u)\,N(\mathrm{d}u)
\triangleq \lambda + \sum_{s \in N, s <t}h(t-s)\,.
\label{def:Lambda}
\end{equation*}
\end{defi}

Hawkes and Oakes~\cite{hawkesoakes} construct a stationary linear Hawkes process on $\R$ 
as the Poisson cluster point process of the arrival instants  in the following immigration-branching process,
in which all random events are independent:
ancestors arrive on $\R$ as a Poisson process with intensity $\lambda$ (immigration),
and each individual arrived in the system has offspring after durations given by 
an inhomogeneous Poisson process  of intensity function $h$ (branching).
Specifically, an individual arrived at time $s$ has offsprings arriving on $(s,\infty)$ as an inhomogeneous
Poisson process of intensity function $t \mapsto  h (t-s)$.

The number of offspring of an individual  is Poisson of mean $\int h(t)\,\mathrm{d}t<\infty$
and the durations before their births are i.i.d.\ with density $h/ \int h(t)\,\mathrm{d}t$.
Along with  most references, hereafter we make the sub-criticality assumption
\begin{equation}
\label{sub-crit-ass}
 \int h(t)\,\mathrm{d}t  <1\,.
\end{equation}

We adapt this construction for our purposes. Using again the superposition property of Poisson point processes, 
$N|_{(0,\infty)}$ will be constructed as the sum of 
the Hawkes process with initial condition the null (empty) process, and the
point process  of the birth instants in $(0,\infty)$ of offspring of  $N^\mathrm{in}$ 
and of all subsequent descendants of these offspring born in  $(0,\infty)$. 
The latter is a Hawkes process with initial condition $N^\mathrm{in}$ 
and $\lambda =0$.

Consider first the case of null initial condition $N^\mathrm{in}=0$ under the assumption~\eqref{sub-crit-ass}. 
Let $\P^\mathrm{cl}$ be the law on $\mathcal{N}_b(\R_+) $ of the cluster 
generated by an ancestor arriving at time~$0$ including~$0$ itself, and $\Gamma(\mathrm{d}t,\mathrm{d}\mu)$ 
a $(\F_t)_{t\ge0}$-Poisson process on $(0,\infty)\times \mathcal{N}_b(\R_+) $ with intensity measure 
$\lambda \,\mathrm{d}t\otimes \P^\mathrm{cl}(\mathrm{d}\mu)$.
The process $\Gamma$ induces a $(\F_t)_{t\ge0}$-marked Poisson 
process of intensity $\lambda$ with instants $(T_n)_{n\ge1}$ independent of
the marks $(\mu_n)_{n\ge1}$ which are i.i.d.\ of law $\P^\mathrm{cl}$.
Using~\eqref{shift} and identifying measures on Borel subsets of $\R$ with their extension by the 
null measure when necessary, let
\begin{equation*}
\left\{
\begin{aligned}
\psi &: \gamma \in \mathcal{N}((0,\infty)\times \mathcal{N}_b(\R_+) ) 
\mapsto 
\psi(\gamma) = \int S_{-t}\mu\,\gamma(\mathrm{d}t,\mathrm{d}\mu) \in \mathcal{N}(\R) \,,
\\
\psi_t &: \gamma \in \mathcal{N}((0,\infty)\times \mathcal{N}_b(\R_+) ) 
\mapsto 
\psi_t(\gamma) = \psi(\ind_{(0,t]\times\mathcal{N}_b(\R_+)} \gamma ) \in \mathcal{N}(\R) \,,
\,t\ge0\,.
\end{aligned}
\right.
\end{equation*}
Note that
$\psi(\gamma) = \psi_t(\gamma )  +\psi(\ind_{(t,\infty)\times\mathcal{N}_b(\R_+)}  \gamma)$.
As in~\cite{hawkesoakes}, the superposition properties of Poisson point processes imply that   
\begin{equation*}
\psi(\Gamma) \triangleq
\int S_{-t}\mu\,\Gamma(\mathrm{d}t,\mathrm{d}\mu)
\triangleq \sum_{n\ge1}  S_{-T_n} \mu_n
\end{equation*}
is a Hawkes process with initial condition the null point process on $(-\infty,0]$.
For $t\ge0$, 
\[
\psi_t(\Gamma)     
\triangleq \int   S_{-s}\mu \,\ind_{(0,t]}(s)\Gamma(\mathrm{d}s,\mathrm{d}\mu)
\triangleq \sum_{n\ge1} \ind_{\{T_n\le t\}} S_{-T_n} \mu_n
\]
is $\F_t$-measurable and contains all the arrival instants of ancestors arrived up to time $t$
and their descendance, and thus encodes the influence after time $t$  of these ancestors.

Consider now a general initial condition $N^\mathrm{in}=0$, under the assumption~\eqref{sub-crit-ass} 
and the mild natural assumptions (the equality in the second following from Fubini's theorem)
\begin{subequations}
\label{init-cond-ass}
\begin{align}
\label{first-mom-ass}
&\int t h(t)\,\mathrm{d}t  <\infty\,,
\\
&\int_{\{s\le 0<t\}} h(t-s)\,N^\mathrm{in}(\mathrm{d}s)\,\mathrm{d}t 
=
\int_0^\infty h(t) N^\mathrm{in}([-t,0])\,\mathrm{d}t
<\infty\,. 
\end{align}
\end{subequations}
Let $D_0$ be a $\F_0$-measurable version of a Hawkes process satisfying Definition~\ref{def:Hawkes} 
with $\lambda=0$. It is equal to $N^\mathrm{in}$ on $(-\infty,0]$, and on $(0,\infty)$  counts
the arrival instants of the offspring of  $N^\mathrm{in}$, a point of which at $s\le0$ 
generates such offspring as an inhomogeneous Poisson process of intensity function 
$t \mapsto  h(t-s)\ind_{(0,\infty)}(t)$,
and the descendants of these offspring generated according to the usual mechanism, all this
using $\F_0$-measurable random elements.
Since~\eqref{sub-crit-ass} and~\eqref{init-cond-ass} hold, \cite[Thm~1(c), Remarks 5,6]{bremaudmassoulie} 
yields that $D_0$ is well-defined and eventually dies out in total variation.
The superposition properties yield that the point process
\begin{equation*}
N \triangleq D_0 + \psi(\Gamma) 
\triangleq D_0 + \int  S_{-t}\mu\,\Gamma(\mathrm{d}t,\mathrm{d}\mu)
\end{equation*}
is a Hawkes process with initial condition $N^\mathrm{in}$. 
For $t\ge0$, 
\[
D_t  \triangleq D_0 +\psi_t(\Gamma)   
\triangleq 
D_0 + \int  S_{-s}\mu \,\ind_{(0,t]}(s) \Gamma(\mathrm{d}s,\mathrm{d}\mu)
\]
is $\F_t$-measurable and contains $N^\mathrm{in}$ as well as the arrival instants on $(0,\infty)$
of the offspring of $N^\mathrm{in}$ and of ancestors arrived up to time $t$ and the descendants of all these.
It thus encodes the influence after time $t$ of the initial condition and of these ancestors.
Note that $D_t$ is not $\sigma(N|_{(-\infty,t]})$-measurable, but that $N|_{(-\infty,t]} = D_t|_{(-\infty,t]}$.
We collect what we have achieved in the following.

\begin{thm}
\label{thm-construction}
Assume that~\eqref{sub-crit-ass} and~\eqref{init-cond-ass} hold.  The point processes $\psi(\Gamma)$ 
and the $\F_t$-measurable  $\psi_t(\Gamma)$ and $D_t$  for $t\ge0$ are well-defined, and 
\begin{equation}
\label{eq-decomp-t}
N \triangleq D_0 + \psi(\Gamma) = D_t + \psi(\ind_{(t,\infty)\times\mathcal{N}_b(\R_+)}  \Gamma)
\end{equation}
is a Hawkes process with initial condition $N^\mathrm{in}$.
\end{thm}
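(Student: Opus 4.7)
The plan is to verify well-definedness and measurability of each object, then to establish the decomposition identity, and finally to check that $N$ satisfies Definition~\ref{def:Hawkes}. Under~\eqref{sub-crit-ass}, the cluster generated by a single ancestor is in distribution a sub-critical Galton-Watson tree with expected cardinality $\sum_{k\ge0}(\int h)^k = 1/(1-\int h)<\infty$, hence a.s.\ a finite point process on $\R_+$; so $\P^\mathrm{cl}$ is a well-defined probability law on $\mathcal{N}_b(\R_+)$, and the marked $(\F_t)_{t\ge0}$-Poisson process $\Gamma$ with intensity $\lambda\,\mathrm{d}t\otimes\P^\mathrm{cl}(\mathrm{d}\mu)$ exists. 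Each atom of $S_{-T_n}\mu_n$ lies at time $\ge T_n>0$, so only the a.s.\ finitely many ancestors with $T_n \le t$ can contribute to $\psi(\Gamma)|_{(0,t]}$, and each of their clusters being a.s.\ finite yields that both $\psi(\Gamma)$ and $\psi_t(\Gamma)$ are boundedly finite; the $(\F_t)$-adaptation of $\Gamma$ makes $\psi_t(\Gamma)$ an $\F_t$-measurable point process. By \cite[Thm~1(c), Remarks 5,6]{bremaudmassoulie}, the joint assumption~\eqref{sub-crit-ass}--\eqref{init-cond-ass} ensures that $D_0$ is well-defined, $\F_0$-measurable, and dies out in total variation; hence $D_t = D_0+\psi_t(\Gamma)$ is $\F_t$-measurable. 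The identity~\eqref{eq-decomp-t} is then immediate from the additive decomposition $\psi(\Gamma) = \psi_t(\Gamma) + \psi(\ind_{(t,\infty)\times\mathcal{N}_b(\R_+)}\Gamma)$ noted just before the theorem.

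To check Definition~\ref{def:Hawkes}, I would first observe that $\psi(\Gamma)|_{(-\infty,0]} = 0$ (since each atom of $S_{-T_n}\mu_n$ lies at time $\ge T_n>0$), so $N|_{(-\infty,0]} = D_0|_{(-\infty,0]} = N^\mathrm{in}$. For the conditional intensity on $(0,\infty)$, I would exploit the independence of $D_0$ (built only from $\F_0$-measurable data) and $\Gamma$ (whose future increments are independent of $\F_0$) and add their compensators. By the Hawkes-Oakes cluster interpretation, ancestors arrive as a rate-$\lambda$ Poisson process and each point born at time $s$ spawns an independent inhomogeneous Poisson offspring process of intensity $t\mapsto h(t-s)\ind_{(s,\infty)}(t)$; superposition then yields that $\psi(\Gamma)$ has conditional intensity $\lambda + \int_{(0,t)} h(t-u)\,\psi(\Gamma)(\mathrm{d}u)$. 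Similarly, by the Br\'emaud-Massouli\'e construction, $D_0$ has conditional intensity $\int_{(-\infty,t)}h(t-u)\,D_0(\mathrm{d}u)$ on $(0,\infty)$. Adding the two and using $\psi(\Gamma)|_{(-\infty,0]}=0$ to extend the $\psi(\Gamma)$-integral from $(0,t)$ to $(-\infty,t)$ yields
\[
\lambda + \int_{(-\infty,t)} h(t-u)\,D_0(\mathrm{d}u) + \int_{(-\infty,t)} h(t-u)\,\psi(\Gamma)(\mathrm{d}u) = \lambda + \int_{(-\infty,t)} h(t-u)\,N(\mathrm{d}u),
\]
the intensity required in Definition~\ref{def:Hawkes}.

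The main obstacle I anticipate is rigorously justifying that the sum of the compensators of the two independent summands is the compensator of $N$ in the filtration in which Definition~\ref{def:Hawkes} is interpreted. The additive density $\lambda + \int h(t-u)\,N(\mathrm{d}u)$ depends only on $N|_{(-\infty,t)}$ and is therefore adapted to the natural filtration of $N$, so the compensator computed a priori in the larger joint filtration of $(D_0,\Gamma)$ automatically projects back to it; this is the standard but not entirely trivial mechanism by which a cluster construction produces a Hawkes process in its own history, and it must be invoked carefully in view of the distinction (emphasised in the introduction) between the ambient filtration $(\F_t)_{t\ge0}$ and the natural filtration of $N$.
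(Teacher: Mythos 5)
Your proof is correct and follows essentially the same route as the paper, whose own proof is a one-line reference back to the construction discussion in Section~\ref{sec-cluster-constr} (superposition of the $\F_0$-measurable $D_0$ supplied by Br\'emaud--Massouli\'e with the Hawkes--Oakes cluster process $\psi(\Gamma)$) together with the obvious additivity $\psi(\gamma)=\psi_t(\gamma)+\psi(\ind_{(t,\infty)\times\mathcal{N}_b(\R_+)}\gamma)$; you simply supply more detail than the paper does. One caveat on your closing paragraph: the filtration in which the intensity $\lambda+\int_{(-\infty,t)}h(t-u)\,N(\mathrm{d}u)$ is the compensator density is the one revealing the branching structure progressively, not literally ``the joint filtration of $(D_0,\Gamma)$'' --- in a filtration where $D_0$ is $\F_0$-measurable its compensator is $D_0$ itself, hence not absolutely continuous --- but you correctly identify the resolution, namely that the intensity is adapted to the internal history of $N$ and therefore projects down to it.
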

\begin{proof}
This follows from the above discussion and notably~\cite[Thm~1(c)]{bremaudmassoulie},
and the obvious $\psi(\gamma) = \psi_t(\gamma )  +\psi(\ind_{(t,\infty)\times\mathcal{N}_b(\R_+)}  \gamma)$.
\end{proof}

\section{Regenerative Results for Linear Hawkes Processes}
\label{sec-regeneration}

Our results aim to be applicable to an important class of functional statistical estimators 
involving a sliding window of  length $A$.
For notational convenience the windows are of the form $(t-A,t]$ for $t\ge 0$, 
observation has started by time $-A$,  and at the deterministic time $T>0$ the estimators are of the form
(see \eqref{shift} for the shift operator $S_t$)
\begin{align}
\label{eq-erg-av}
&\frac1{T}\int_0^T f(S_t (N|_{(t-A,t]}))\,\mathrm{d}t
\triangleq
\frac1{T}\int_0^T f((S_t N)|_{(-A,0]}))\,\mathrm{d}t
\,,
\\ \notag
&f\in \mathcal{B}(\mathcal{N}((-A,0]))\,.
\end{align}
To consider observations starting at time $0$ it suffices to shift the process or relabel time
adequately. This useful class is widely studied as in~\cite{reynaudbouretroy,costa2018renewal}.

These estimators are absolutely continuous processes of $T$, and can be used to approximate
in a well-controlled and precise way many estimators based on integrals with respect to $N$
which are pure jump processes of $T$.
For instance, a useful class is constituted of estimators of the form, 
for functions $w$ in $\mathcal{B}(\R)$ with support included in some 
$(-A,0]$,
\begin{align}
\label{eq-erg-jumps} 
&\frac1{T} \int_{(-A,T]\times (-A,T]} w(y-x)\, N(\mathrm{d}x)N(\mathrm{d}y)
\\ \notag
&\quad=\frac1{T} \int_{\{-A < y\le x \le T\}} w(y-x)\, N(\mathrm{d}x)N(\mathrm{d}y)\,.
\end{align}
Appendix~\ref{appendix-estim} shows how to approximate~\eqref{eq-erg-jumps} by~\eqref{eq-erg-av} 
for a well-chosen $f\triangleq f_w$ in a quite controllable way, which
allows to apply results true for~\eqref{eq-erg-av} for $f\triangleq f_w$ 
on~\eqref{eq-erg-jumps}. 
Regenerative techniques may be applied directly to estimators of the form~\eqref{eq-erg-jumps},
but we chose not to develop this.
In a multi-dimensional setting where $N=(N^i : 1\le i\le d)$ this class generalizes under the form
$\bigl(\frac1{T} \int_{\{-A < y\le x \le T\}} w^{ij}(y-x)\, N^i(\mathrm{d}x)N^j(\mathrm{d}y) : 1\le i,j\le d\bigr)$
and provides valuable information on the relations between components.

The following notion corresponds to classical regeneration for the c\`adl\`ag process 
$(S_t (N|_{(t-A,t]}), t\ge0) \triangleq ((S_t N)|_{(-A,0]}), t\ge0)$,
which explains why the past is wide-sense and the future is strict in terms of $N$.

\begin{defi}[$A$-regeneration]
\label{def:A-regeneration}
Let $0\le A <\infty$.
An a.s.\ finite $(\F_t)_{t\ge0}$-stopping time $\rho$ is called an \emph{$A$-regeneration time} if 
\[
\F_\rho
\;\;\text{is independent of}\;\;
S_\rho (N|_{(\rho-A,\infty)} )\triangleq (S_\rho N)|_{(-A,\infty)}
\]
and the latter has same law as the restriction to $(-A,\infty)$ of a Hawkes process with null initial condition,
\emph{i.e.}, as $\psi(\Gamma)|_{(-A,\infty)}$.
Note that $\sigma(N|_{(-\infty,\rho]})\subset \F_\rho$.
\end{defi}

The event that the influence of the initial condition $N^\mathrm{in}$ and of ancestors arrived in $(0,\rho-A]$ 
has vanished in $(\rho-A,\infty)$ and that no ancestor has arrived in $(\rho-A,\rho]$ 
can be conveniently written $D_\rho ((\rho-A,\infty)) =0$.

\begin{thm}
\label{thm-regen}
Assume that~\eqref{sub-crit-ass} and~\eqref{init-cond-ass} hold. Let $0\le A<\infty$.
Then an a.s.~finite $(\F_t)_{t\ge0}$-stopping time $\rho$ is an $A$-regeneration time if and only if 
$D_\rho ((\rho-A,\infty)) =0$.
\end{thm}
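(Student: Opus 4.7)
The strategy is to combine the Poisson cluster decomposition from Theorem~\ref{thm-construction},
\begin{equation*}
N = D_\rho + \psi(\ind_{(\rho,\infty)\times\mathcal{N}_b(\R_+)}\Gamma),
\end{equation*}
with the $(\F_t)_{t\ge0}$-strong Markov property of the marked Poisson process $\Gamma$: at an a.s.\ finite stopping time $\rho$, translating the time coordinate of $\Gamma|_{(\rho,\infty)\times\mathcal{N}_b(\R_+)}$ by $-\rho$ produces a random element independent of $\F_\rho$ and distributed as $\Gamma$. Since the map $\psi$ intertwines this temporal translation with the spatial shift $S_\rho$ on point processes, the random measure $Y \triangleq S_\rho\psi(\ind_{(\rho,\infty)\times\mathcal{N}_b(\R_+)}\Gamma)|_{(-A,\infty)}$ will be independent of $\F_\rho$ and distributed as $\psi(\Gamma)|_{(-A,\infty)}$. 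Moreover every atom of $\psi(\ind_{(\rho,\infty)\times\mathcal{N}_b(\R_+)}\Gamma)$ lies in $(\rho,\infty)\subset(\rho-A,\infty)$, so the restriction $Y$ captures all of it.

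\emph{Sufficiency.} Assuming $D_\rho((\rho-A,\infty))=0$ a.s., the $D_\rho$ summand contributes nothing on $(\rho-A,\infty)$, whence $S_\rho N|_{(-A,\infty)}=Y$ and Definition~\ref{def:A-regeneration} is immediately met.

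\emph{Necessity.} Conversely, set $X\triangleq S_\rho D_\rho|_{(-A,\infty)}$, an $\F_\rho$-measurable point process independent of $Y$, and note $S_\rho N|_{(-A,\infty)}=X+Y$. The $A$-regeneration hypothesis gives $X+Y\stackrel{d}{=}\psi(\Gamma)|_{(-A,\infty)}\stackrel{d}{=}Y$. Comparing Laplace functionals at any bounded non-negative Borel $f$ with bounded support in $(-A,\infty)$,
\begin{equation*}
\E[\mathrm{e}^{-X(f)}]\,\E[\mathrm{e}^{-Y(f)}] = \E[\mathrm{e}^{-(X+Y)(f)}] = \E[\mathrm{e}^{-Y(f)}].
\end{equation*}
Under~\eqref{sub-crit-ass}, $Y$ is a.s.\ boundedly finite, so $\E[\mathrm{e}^{-Y(f)}]>0$ and hence $\E[\mathrm{e}^{-X(f)}]=1$, which forces $X(f)=0$ a.s. Evaluating at $f=\ind_B$ with $B$ running over a countable algebra generating the bounded Borel sets of $(-A,\infty)$ then yields $X\equiv 0$ a.s., equivalently $D_\rho((\rho-A,\infty))=0$ a.s.

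\emph{Main obstacle.} The truly delicate step is the precise formulation of the strong Markov property for $\Gamma$: one must verify that the restrictions $\Gamma|_{(0,t]\times\mathcal{N}_b(\R_+)}$ are $\F_t$-adapted (which they are by construction) and invoke the standard Poisson-random-measure splitting theorem on the Polish product $(0,\infty)\times\mathcal{N}_b(\R_+)$, and then check that $\psi$ genuinely intertwines the time translation of $\Gamma$ with the shift $S_\rho$ on the resulting point process. Once that machinery is laid down, both implications reduce to a fairly mechanical Laplace-functional computation.
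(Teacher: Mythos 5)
Your proof is correct and follows essentially the same route as the paper: the sufficiency direction is exactly the paper's argument (the decomposition \eqref{eq-decomp-t} at $\rho$ plus the strong Markov property of the $(\F_t)_{t\ge0}$-Poisson process $\Gamma$), while the converse, which the paper dismisses as ``obvious'', you flesh out with a clean and valid Laplace-functional argument showing that $X+Y\stackrel{d}{=}Y$ with $X$ independent of $Y$ forces $X=0$ a.s. No gaps; the only remark is that your necessity argument uses only the equality-in-law half of Definition~\ref{def:A-regeneration}, which makes it slightly stronger than required.
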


\begin{proof}
If $D_\rho ((\rho-A,\infty)) =0$ then, using~\eqref{eq-decomp-t},
\[
S_\rho (N|_{(\rho-A,\infty)}) 
= S_\rho (\psi(\ind_{(\rho,\infty)\times\mathcal{N}_b(\R_+)}  \Gamma)|_{(\rho-A,\infty)}) 
\]
and the strong Markov property of the $(\F_t)_{t\ge0}$-Poisson point process $\Gamma$ yields that
$\rho$ is an $A$-regeneration time. 
The converse is obvious.
\end{proof}

We now state the main regenerative result of the paper.

\begin{thm}
\label{thm-regen-seq}
Assume that~\eqref{sub-crit-ass} and~\eqref{init-cond-ass} hold.Let $0\le A <\infty$ and
\begin{align*}
\tau_0 &\triangleq \tau^A_0 \triangleq \inf\{t\ge 0 : D_t ((t-A,\infty)) =0\}\,,
\\
\tau_k &\triangleq  \tau^A_k \triangleq\inf \{t >\tau^A_{k-1} : D_t ([t-A,\infty)) \neq 0\,,\; D_t ((t-A,\infty)) =0\}\,,
\;k\ge1\,.
\end{align*}
Then $(\tau_k)_{k\ge0}$ is a sequence of $A$-regeneration times.
Thus the delay and the cycles 
\[
N|_{(-A,\tau_0]}\,,
\quad
S_{\tau_{k-1}} (N|_{(\tau_{k-1}-A,\tau_k]} ) \triangleq
(S_{\tau_{k-1}} N)|_{(-A,\tau_k-\tau_{k-1}]}\,,\; k\ge1\,,
\]
are all independent and each cycle $(S_{\tau_{k-1}} N)|_{(-A,\tau_k-\tau_{k-1}]}$ has same law as 
$N|_{(-A,\tau_1]}$ when $N^\mathrm{in}=0$, and thus as $\psi(\Gamma)|_{(-A,\tau]}$ for
\[
\tau \triangleq \tau^A \triangleq
\inf\{t > 0: \psi_t(\Gamma)([t-A,\infty)) \neq 0\,,\; \psi_t(\Gamma)((t-A,\infty))   =0\}\,.
\]
Moreover $\tau$ and the $\tau_k-\tau_{k-1}$ are integrable, and  
the $\tau_k$ are integrable if $\tau_0$ is integrable.
\end{thm}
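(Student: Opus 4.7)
The plan is to proceed in four steps: (i) verify the $(\F_t)_{t\ge 0}$-stopping time property, (ii) establish a.s.\ finiteness, (iii) deduce $A$-regeneration and the independence/identical distribution of cycles from Theorem~\ref{thm-regen} and the strong Markov property of $\Gamma$, and (iv) derive the integrability statements.

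For step~(i), since $D_t$ is $\F_t$-measurable and the counting-process realisation of $(D_t)_{t\ge 0}$ is c\`adl\`ag with values in the Polish space of point measures, the maps $t \mapsto D_t((t-A,\infty))$ and $t \mapsto D_t([t-A,\infty))$ are progressively measurable, so the $\tau_k$ are successive debuts of progressive sets after $\tau_{k-1}$ and hence $(\F_t)_{t\ge 0}$-stopping times.

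For step~(ii), by~\eqref{eq-decomp-t} and the $(\F_t)_{t\ge 0}$-strong Markov property of the Poisson process $\Gamma$ applied at $\tau_{k-1}$, it suffices to prove that $\tau \triangleq \tau^A$ is a.s.\ finite under the null initial condition (this handles $\tau_k-\tau_{k-1}$ for $k\ge 1$) and that $\tau_0$ is a.s.\ finite under the general initial condition. Under~\eqref{sub-crit-ass} and~\eqref{first-mom-ass} each cluster has a.s.\ finite length of finite mean, an ancestor has no offspring with probability $e^{-\int h(t)\,\mathrm{d}t} > 0$, and the ancestor arrivals form a rate-$\lambda$ Poisson process with i.i.d.\ exponential gaps. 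A Borel--Cantelli / renewal argument on the ancestor stream then produces a.s.\ some ancestor at time $s$ which (a) has no offspring, (b) is followed by no other ancestor in $(s,s+A]$, and (c) is preceded by ancestors all of whose clusters have ended by $s$; any such $s$ yields $\tau\le s+A<\infty$. This is precisely the interpretation of $\tau$ as the return-to-empty time of an $M/G/\infty$ queue with service times equal to cluster length plus $A$, developed in Section~\ref{sec-main-results-times}, and the same interpretation applied to $D_t$ (incorporating the additional initial workload from $D_0$, which dies out in total variation by~\cite[Thm~1(c), Remarks~5,6]{bremaudmassoulie} under~\eqref{init-cond-ass}) yields $\tau_0<\infty$ a.s.

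Steps~(iii) and~(iv) follow cleanly. Since $D_{\tau_k}((\tau_k-A,\infty))=0$ by construction, Theorem~\ref{thm-regen} yields that each $\tau_k$ is an $A$-regeneration time. Applying Definition~\ref{def:A-regeneration} iteratively: $\F_{\tau_{k-1}}$, which contains $\tau_0,\ldots,\tau_{k-1}$, the delay $N|_{(-A,\tau_0]}$ and the first $k-1$ cycles, is independent of $(S_{\tau_{k-1}}N)|_{(-A,\infty)}$, and the latter is distributed as $\psi(\Gamma)|_{(-A,\infty)}$; extracting the initial segment up to the first instant satisfying the defining conditions of $\tau$, the $k$-th cycle $S_{\tau_{k-1}}(N|_{(\tau_{k-1}-A,\tau_k]})$ is independent of $\F_{\tau_{k-1}}$ and distributed as $\psi(\Gamma)|_{(-A,\tau]}$, giving mutual independence and identical distribution. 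For integrability, the $M/G/\infty$ interpretation of Section~\ref{sec-main-results-times} and Tak\`acs's formula yield $\E[\tau]<\infty$ under~\eqref{sub-crit-ass} and~\eqref{first-mom-ass}; the distributional identity $\tau_k-\tau_{k-1}\stackrel{d}{=}\tau$ for $k\ge 1$ then gives $\E[\tau_k-\tau_{k-1}]<\infty$, whence $\E[\tau_k]=\E[\tau_0]+k\,\E[\tau]$ is finite iff $\E[\tau_0]<\infty$. The main obstacle is step~(ii) in the general initial-condition case, which couples the total-variation extinction of $D_0$ with the $M/G/\infty$ renewal dynamics; the cleanest route is the forward reference to Section~\ref{sec-main-results-times}, whose queueing analysis encapsulates precisely this renewal argument.
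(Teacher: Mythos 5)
Your proposal is correct and follows essentially the same route as the paper: integrability and a.s.\ finiteness of $\tau$ via the forward reference to the $M/G/\infty$ interpretation and Tak\`acs's formula (Theorem~\ref{thm-takacs}), recursive application of Theorem~\ref{thm-regen} and the strong Markov property of $\Gamma$ for the regeneration/independence structure, and finiteness of $\tau_0$ under a general initial condition by combining the a.s.\ extinction of $D_0$ with the regeneration times of $\psi(\Gamma)$ (the paper makes this last step concrete by taking $U$ the last point of $D_0$ and bounding $\tau_0\le\tau'_{\kappa+1}$ where $\kappa=\inf\{k\ge1: U\le\tau'_k\}$, which is the precise form of your ``initial workload dies out'' argument). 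The extra verifications you include (stopping-time property, Borel--Cantelli sketch) are left implicit in the paper but are consistent with it.
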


\begin{proof}
 Theorem~\ref{thm-takacs} below yields that $\tau$ is integrable.
If $N^\mathrm{in}$ is the null point process, 
then $\tau_0=0$ and the result follows from a recursive application of Theorem~\ref{thm-regen}.
For a general initial condition,  $N \triangleq D_0 + \psi(\Gamma)$  in which $D_0$ is an a.s.\ finite point process
and $\psi(\Gamma)$ has a null initial condition. 
(see Section~\ref{sec-cluster-constr}).
Let $U$ denote the last point of $D_0$ and $\tau'_k$  the regeneration times of
$\psi(\Gamma)$. Then $U<\infty$ and $\kappa\triangleq \inf\{k\ge 1 : U\le \tau'_k \} <\infty$
and hence $\tau_0 \le \tau'_{\kappa+1}<\infty$, a.s.
\end{proof}

Simple conditions on $h$ and $N^\mathrm{in}$ imply that $\tau^A_0$ is integrable.
In the important case that $N^\mathrm{in}$ is stochastically dominated by the stationary Hawkes process 
it suffices that additionally $\int t^2 h(t)\,\mathrm{d}t<\infty$,
see Theorem~\ref{thm-delay} below.

In order for these regenerative properties to be useful in applications, tight estimates on the $A$-regeneration times 
are required. This is quite crucial here since in general the $\tau_k$ are not stopping-times for $N$, 
nor even $\sigma(N)$-measurable since the observation of $N$ cannot directly differentiate between ancestors and
descendants. 
Nevertheless, developing statistical methods for $\tau$ is simpler than, \emph{e.g.},
in the case of Nummelin splitting.
In the bounded memory case, if $h$ vanishes on $(A,\infty)$ then the $(\tau_k)_{k\ge0}$  
are the stopping times for $N$ defined just before~\cite[Thm~3.6]{costa2018renewal} 
which can be easily estimated.

\section{Regeneration Time and Stochastic Domination}
\label{sec-main-results-times}

We hereafter assume that the transfer function $h$ satisfies~\eqref{sub-crit-ass} and~\eqref{first-mom-ass}. 
We consider $0\le A<\infty$ and explicitly mark the $A$-dependence for clarity, but it may be dropped
when the context is clear.
We give a series of results on $\tau \triangleq \tau^A$ culminating with a stochastic domination result
providing good bounds in terms of the transfer function $h$. The evaluation of $h$
is well understood and is a primary goal of any statistical study of the Hawkes process. 

The cluster length $L$ is a generic random variable with same law 
as the duration between the arrival instant of 
an ancestor and the last birth instant of his descendants, 
\emph{i.e.}, as the last point of the cluster under the law $\P^\mathrm{cl}$.
Its cumulative distribution function (c.d.f.), with support $[0,\infty)$, is given by
\[
F: x\in\R\mapsto F(x) =\P(L\le x)\,.
\]

A crucial fact is that $\tau \triangleq \tau^A$  is the renewal time at state $0$ 
of a $M/G/\infty$ queue with arrivals with intensity $\lambda$ and generic service duration $L+A$
with c.d.f., with support $[A,\infty)$, given by
\begin{equation}
\label{cdf-A}
F^A: x\in \R \mapsto  F^A(x) = F(x-A)\,.
\end{equation}
As it is well known, see \cite[p.502]{hawkesoakes}, since  
any individual in the branching process has a number of offspring with mean $\int h(t)\,\mathrm{d}t$  
after durations of mean $\int t h(t)\,\mathrm{d}t/ \int h(t)\,\mathrm{d}t$,
\[
\int_0^\infty (1-F^A(u))\,\mathrm{d}u =\E[L] +A
\le
\frac{\int h(t)\,\mathrm{d}t}{1-\int h(t)\,\mathrm{d}t}
\frac{\int t h(t)\,\mathrm{d}t} {\int h(t)\,\mathrm{d}t} +A
<\infty
\]
and much better bounds are available. 
This yields the following.

\begin{thm}[Takács]
\label{thm-takacs}
Assume that~\eqref{sub-crit-ass} and~\eqref{first-mom-ass} hold.
Let $0\le A <\infty$.
The Laplace transform of $\tau^A$ is given by
\begin{align}
\label{A-laplace}
&\E\bigl[\mathrm{e}^{-s \tau^A}\bigr] 
= 1-
\frac1{\lambda + s} 
\biggl(\int_0^\infty \mathrm{e}^{-st - \lambda \int_0^t (1-F^A(u))\,\mathrm{d} u}\,\mathrm{d}t
\biggr)^{-1}\,,
\\ \notag
&s\in \C\,,\;\Re(s) > 0\,.
\end{align}
Moreover  $\E[\tau^A]<\infty$, $\E[(\tau^A)^2]<\infty$ if and only if $\int t^2 h(t)\,\mathrm{d}t  <\infty$, 
and
\begin{align*}
\E[\tau^A] &=  \frac1\lambda \mathrm{e}^{\lambda (\E[L]+A)} <\infty\,,
\\
\E[(\tau^A)^2] 
&= 
\frac2\lambda \mathrm{e}^{2\lambda (\E[L]+A)}
\int_0^\infty
\Bigl(\mathrm{e}^{-\lambda\int_0^t (1-F^A(u))\,\mathrm{d}u}-\mathrm{e}^{-\lambda (\E[L]+A) }\Bigr)\,
\mathrm{d}t 
\\
&\quad+ \frac2{\lambda^2} \mathrm{e}^{\lambda (\E[L]+A)}\,.
\end{align*}
\end{thm}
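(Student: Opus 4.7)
The plan is to exploit the identification made in the text preceding the statement: $\tau^A$ is the first return to the empty state of the $M/G/\infty$ queue with Poisson arrivals of rate $\lambda$ and generic service time $L+A$ with c.d.f.\ $F^A$. The classical Tak\`acs formula~\cite{Takacs1956,Takacs1962}, collected in Appendix~\ref{appendix-queue}, expresses the Laplace transform of such a renewal time solely in terms of the service-time c.d.f.\ and the arrival rate. Substituting $F^A$ would yield~\eqref{A-laplace} directly. The assumptions~\eqref{sub-crit-ass} and~\eqref{first-mom-ass}, via the bound $\int_0^\infty(1-F^A(u))\,\mathrm{d}u = \E[L]+A<\infty$ recorded just before the statement, guarantee that the inner integral converges for every $s$ with $\Re(s)>0$.

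For the moments I would carry out the asymptotic expansion of the right-hand side of~\eqref{A-laplace} as $s\to 0^+$. Write $c \triangleq \mathrm{e}^{-\lambda(\E[L]+A)}$ and $g(t)\triangleq \mathrm{e}^{-\lambda\int_0^t(1-F^A(u))\,\mathrm{d}u}$, which decreases monotonically to $c$ on $[0,\infty)$. Splitting $g=c+(g-c)$ gives $G(s)\triangleq\int_0^\infty\mathrm{e}^{-st}g(t)\,\mathrm{d}t=c/s+R(0)+o(1)$ as $s\to 0^+$, where $R(0)\triangleq\int_0^\infty(g(t)-c)\,\mathrm{d}t\in[0,\infty]$. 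Inverting $(\lambda+s)G(s)$ and expanding to second order leads to
\begin{equation*}
1-\E[\mathrm{e}^{-s\tau^A}] = \frac{s}{\lambda c} - \frac{s^2(c+\lambda R(0))}{(\lambda c)^2} + o(s^2)\,,
\end{equation*}
and matching against $s\E[\tau^A]-\tfrac{s^2}{2}\E[(\tau^A)^2]+o(s^2)$ delivers both stated formulae; the mean is always finite, while the second moment is finite if and only if $R(0)<\infty$.

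The main obstacle is the equivalence $\E[(\tau^A)^2]<\infty \iff \int t^2 h(t)\,\mathrm{d}t<\infty$. Writing $g(t)-c = c\bigl(\mathrm{e}^{\lambda\int_t^\infty(1-F^A(u))\,\mathrm{d}u}-1\bigr)$ and using $\int_t^\infty(1-F^A(u))\,\mathrm{d}u\to 0$, one sees that $R(0)<\infty$ is equivalent to $\int_0^\infty u(1-F^A(u))\,\mathrm{d}u<\infty$, which after integration by parts is $\E[L^2]<\infty$. It then remains to show that the cluster length $L$ under $\P^\mathrm{cl}$ satisfies $\E[L^2]<\infty$ if and only if $\int t^2 h(t)\,\mathrm{d}t<\infty$. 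My approach is a branching-process argument: decomposing the cluster via $L=\max_k(T_k+L^{(k)})_+$ with $(T_k)$ an inhomogeneous Poisson process of intensity $h$ independent of i.i.d.\ sub-cluster lengths $L^{(k)}\sim L$, the crude bound $L^2\le\sum_k(T_k+L^{(k)})^2$ combined with~\eqref{sub-crit-ass} and the previously established $\E[L]<\infty$ yields the forward direction. For the converse, any first-generation birth after time $t$ already forces $L>t$, so $\P(L>t)\ge 1-\mathrm{e}^{-\int_t^\infty h(u)\,\mathrm{d}u}$, and $\E[L^2]=2\int_0^\infty t\,\P(L>t)\,\mathrm{d}t$ combined with Fubini gives the reverse implication. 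This branching computation, in the spirit of~\cite{Moller-Rasmussen2005}, is the genuinely non-routine part of the argument.
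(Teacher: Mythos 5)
Your proposal is correct, but it is considerably more self-contained than the paper's proof, which consists entirely of the citation ``See Tak\'acs~\cite[(37)--(39)]{Takacs1956} or \cite[pp.210,211]{Takacs1962}'': the paper takes the Laplace transform \emph{and} the two moment formulae directly from Tak\'acs, and leaves the equivalence $\E[(\tau^A)^2]<\infty \iff \int t^2 h(t)\,\mathrm{d}t<\infty$ implicit. You instead (i) cite Tak\'acs only for~\eqref{A-laplace}, (ii) re-derive the moments by the $s\to0^+$ expansion of $1-\E[\mathrm{e}^{-s\tau^A}]=((\lambda+s)G(s))^{-1}$ with $G(s)=c/s+R(s)$, and (iii) supply a genuine branching argument for $\E[L^2]<\infty\iff\int t^2h(t)\,\mathrm{d}t<\infty$. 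Your algebra in (ii) checks out (with $c=\mathrm{e}^{-\lambda(\E[L]+A)}$ the coefficients reproduce exactly the stated $\E[\tau^A]=1/(\lambda c)$ and $\E[(\tau^A)^2]=2R(0)/(\lambda c^2)+2/(\lambda^2 c)$), and the ``if and only if'' transfer through the expansion is legitimate provided you invoke the standard monotone-convergence fact that $(\E[\mathrm{e}^{-s\tau}]-1+s\E[\tau])/s^2\to\E[\tau^2]/2$ in $[0,\infty]$; when $R(0)=\infty$ the second-order ``expansion'' must be read as a limit statement rather than a Taylor formula. Part (iii) is the real added value: Tak\'acs knows nothing about Hawkes clusters, so this step is needed to justify the theorem as stated, and both your directions (the Mecke-formula bound $\E[L^2]\le\int t^2h+2\E[L]\int th+\E[L^2]\int h$ for the forward implication, and $\P(L>t)\ge1-\mathrm{e}^{-\int_t^\infty h}$ plus Fubini for the converse) are sound.

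One point to tighten: the forward direction as written is circular, since the inequality $\E[L^2](1-\int h)\le \int t^2h+2\E[L]\int th$ is vacuous if $\E[L^2]=\infty$. You should first apply the bound to a truncation (the cluster restricted to the first $m$ generations, or $L\wedge n$), for which the second moment is finite, obtain the bound uniformly in the truncation parameter using~\eqref{sub-crit-ass}, and then pass to the limit by monotone convergence. This is routine but necessary for the argument to close.
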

\begin{proof}
See Takács~\cite[(37)--(39)]{Takacs1956} or \cite[pp.210,211]{Takacs1962}.
\end{proof}

Appendix~\ref{appendix-queue} lists some expressions for~\eqref{A-laplace} and the integral in this formula.
The fact that $\tau^A$ is the sum of an exponential $\mathcal{E}(\lambda)$ idle period 
and an independent busy period $\beta^A$ implies the product form 
$\E\bigl[\mathrm{e}^{-s \tau^A}\bigr] = \frac\lambda{\lambda + s} \E\bigl[\mathrm{e}^{-s \beta^A}\bigr]
$ in \eqref{A-laplace-prod} and the first equality in~\eqref{B-laplace}. 
Theorem~\ref{thm-takacs} can be expressed in terms of $F$ only using \eqref{cdf-A} 
which yields that
\begin{align}
&\label{Atozero1}
\int_0^t (1-F^A(u))\,\mathrm{d}u 
= \ind_{\{t\le A\}} t + \ind_{\{t > A\}}\biggl(A + \int_0^{t-A} (1-F(u))\,\mathrm{d}u \biggr)\,,
\\
\label{Atozero2}
&\int_0^\infty \mathrm{e}^{-st - \lambda \int_0^t(1-F^A(u))\,\mathrm{d} u}\,\mathrm{d}t
\\ \notag
&\quad= \frac{1- \mathrm{e}^{-(\lambda + s)A}}{\lambda + s}
+  \mathrm{e}^{-(\lambda + s)A}
\int_0^\infty \mathrm{e}^{-st - \lambda\int_0^t(1-F(u))\,\mathrm{d} u}\,\mathrm{d}t\,.
\end{align}
This yields relations between $\tau^A$ and $\tau^0$ such as~\eqref{lapl-A-zero-tau},
$\beta^A$ and $\beta^0$ such as~\eqref{lapl-A-zero-beta}, 
and formulæ for $\beta^A$ and hence $\tau^A$ such as~\eqref{B-laplace-Atozero}.

There is an apparent analytic singularity in the Laplace transform~\eqref{A-laplace} at $s=0$.
It was removed under exponential decay assumptions 
on $1-F^A$ in~\cite[Thm A.1]{costa2018renewal} using integration by parts, 
analytic continuation and Laplace transform properties to provide a negative upper-bound 
for the abscissa of convergence.
Here we will go further and use explicit computations.
For $\theta >0$ we denote the c.d.f.\ of the exponential law $\mathcal{E}(\theta)$ by
\[
G^\theta: x\in\R \mapsto G^\theta(x) = (1-\mathrm{e}^{-\theta x})^+\,.
\]

\begin{thm}[M\"oller and Rasmussen]
\label{thm-sto-dom-cl}
Assume that $\int h(t)\,\mathrm{d}t<1$. 
If $\theta >0$ is such that $\int \mathrm{e}^{\theta t}h(t)\,\mathrm{d}t\le 1$
then $G^\theta\le F$. Then $L$ is stochastically dominated by the exponential random variable $\mathcal{E}(\theta)$.
Notably $\E[L]\le1/\theta$.
\end{thm}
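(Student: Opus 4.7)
The plan is to set up a functional equation for the cluster-length c.d.f.\ $F$ from the Poisson cluster structure, and then bound $F$ from below by iterating that equation generation-by-generation. The ancestor at time~$0$ has direct offspring at the points of a Poisson process with intensity~$h$ on $(0,\infty)$, and each offspring at time $T_i$ generates a subcluster of length $L^{(i)}$ that is an independent copy of~$L$, so that $L = \max_i(T_i + L^{(i)})$ with the convention $\max\emptyset = 0$. A cluster ends by time~$x\ge0$ iff each direct offspring occurs before time~$x$ and its subcluster has length at most $x-T_i$, so applying the Poisson functional identity $\E[\prod_i g(T_i)] = \exp(-\int(1-g(t))h(t)\,\mathrm{d}t)$ with $g(t) = F(x-t)\ind_{\{t\le x\}}$ yields the Hawkes--Oakes functional equation
\[
F(x) = \exp\left(-\int_0^x (1-F(x-t))h(t)\,\mathrm{d}t - \int_x^\infty h(t)\,\mathrm{d}t\right)\,, \qquad x\ge0\,.
\]

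Next I would exploit the generation structure: let $L_n$ denote the maximal birth time using only the first $n$ generations of offspring, so $L_0\equiv0$ and $L_n\uparrow L$ a.s.\ under the sub-criticality assumption~\eqref{sub-crit-ass}. The c.d.f.\ $F_n$ of $L_n$ satisfies the analogous recursion with $F_n$ on the right-hand side and $F_{n+1}$ on the left, starting from $\bar F_0 := 1-F_0 \equiv 0$ on $[0,\infty)$. I would then show by induction that $\bar F_n(x) \le \mathrm{e}^{-\theta x}$ for all $x\ge0$ and $n\ge0$. The induction step uses $1-\mathrm{e}^{-y}\le y$ and the induction hypothesis:
\begin{align*}
\bar F_{n+1}(x)
&\le \int_0^x \bar F_n(x-t) h(t)\,\mathrm{d}t + \int_x^\infty h(t)\,\mathrm{d}t \\
&\le \mathrm{e}^{-\theta x}\int_0^x \mathrm{e}^{\theta t}h(t)\,\mathrm{d}t + \mathrm{e}^{-\theta x}\int_x^\infty \mathrm{e}^{\theta t}h(t)\,\mathrm{d}t = \mathrm{e}^{-\theta x}\int_0^\infty \mathrm{e}^{\theta t}h(t)\,\mathrm{d}t \le \mathrm{e}^{-\theta x}\,,
\end{align*}
where the second line uses $\mathrm{e}^{\theta t}\ge \mathrm{e}^{\theta x}$ for $t\ge x$ with $\theta>0$, together with the assumption $\int \mathrm{e}^{\theta t}h(t)\,\mathrm{d}t\le 1$.

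Passing to the limit by monotone convergence gives $\bar F(x) = \lim_n \bar F_n(x)\le \mathrm{e}^{-\theta x}$ for $x\ge0$, hence $F\ge G^\theta$ on all of $\R$ and therefore $L$ is stochastically dominated by an $\mathcal{E}(\theta)$ variable. The moment bound follows from $\E[L] = \int_0^\infty \bar F(x)\,\mathrm{d}x \le \int_0^\infty \mathrm{e}^{-\theta x}\,\mathrm{d}x = 1/\theta$. I expect the only genuine subtlety to be the choice of iteration scheme: the pointwise inequality $\bar F \le T\bar F$ obtained directly from the fixed-point equation goes in the \emph{wrong} direction for a naive comparison argument, whereas the generation-truncation sequence $(\bar F_n)$ starts at $0$ and climbs monotonically up to $\bar F$ while preserving the exponential envelope at every step, which is what delivers the desired upper bound.
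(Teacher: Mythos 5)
Your proof is correct, and it reconstructs from scratch the argument the paper simply delegates to M\"oller and Rasmussen~\cite[Prop.~3, Lemma~2]{Moller-Rasmussen2005}: the Hawkes--Oakes functional equation for $F$, combined with a monotone generation-truncation iteration $\bar F_n \uparrow \bar F$ that preserves the exponential envelope $\bar F_n(x)\le\mathrm{e}^{-\theta x}$ at every step. You have also correctly identified the one genuine subtlety --- that the fixed-point inequality $\bar F\le T\bar F$ goes in the wrong direction for a direct comparison, which is exactly why the truncated sequence starting from $\bar F_0\equiv 0$ is the right iterate --- so your writeup is a faithful and self-contained account of the cited proof.
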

\begin{proof}
Follows from
M\"oller and Rasmussen~\cite[Prop.~3, Lemma~2]{Moller-Rasmussen2005}.
\end{proof}

If $\int \mathrm{e}^{\theta t}h(t)\,\mathrm{d}t < \infty$ for some $\theta> 0$ then,
using $\int h(t)\,\mathrm{d}t < 1$ and monotone convergence,
\begin{equation}
\label{theta-star}
\left\{
\begin{aligned}
&\theta^* \triangleq \sup\biggl\{\theta >0 : \int \mathrm{e}^{\theta t}h(t)\,\mathrm{d}t \le 1\biggr\}>0\,,
\\
&\forall \theta \in (0,\theta^*)\,,\; \int \mathrm{e}^{\theta t}h(t)\,\mathrm{d}t\le 1\,,
\end{aligned}
\right.
\end{equation}
and
 if $1\le \int \mathrm{e}^{\theta t}h(t)\,\mathrm{d}t < \infty$ for some $\theta> 0$ then
$\int \mathrm{e}^{\theta^* t}h(t)\,\mathrm{d}t= 1$.

Theorem~\ref{thm-sto-dom-cl} yields much tighter bounds on $L$ than~\cite[Sect.~1.1]{reynaudbouretroy},
in which $p = \int h(t)\,\mathrm{d}t$, $W_\infty$ denotes the cardinal of the cluster, $L$ is denoted by $H$ 
and bounded by a sum of $W_\infty-1$ random variables of density $h/p$ (all independent), 
and it is assumed that $\ell(\theta) \triangleq \log(\int \mathrm{e}^{\theta t}h(t)\,\mathrm{d}t/ p) \le p - \log p -1$
to conclude that $\P(L>x ) \le \mathrm{e}^{1-p} \mathrm{e}^{-\theta x}$.
In this notation, Theorem~\ref{thm-sto-dom-cl} assumes  that $\ell(\theta) \le - \log p$
to conclude that $\P(L>x ) \le \mathrm{e}^{-\theta x}$. 
Since  $- \log p =  1-p + (p - \log p -1)  > p - \log p -1  = (1-p)^2/2 + o_{p\to1}((1-p)^2)$,
Theorem~\ref{thm-sto-dom-cl} provides much better exponents when $p$ is close to $1$, and 
the absence of a prefactor $\mathrm{e}^{1-p}>1$ yields stochastic domination. 

For $0\le A<\infty$ and $\theta >0$, let $\tau^{\theta,A}$ denote the renewal time at  $0$ 
and $\beta^{\theta,A}$ the busy period of a $M/G/\infty$ 
queue with arrivals at rate $\lambda$ and service time given by the sum of 
an exponential random variable $\mathcal{E}(\theta)$ and of $A$ and hence
with c.d.f. $G^{\theta,A} : x\mapsto(1-\mathrm{e}^{-\theta (x-A)})^+$.
We refer to Appendix~\ref{appendix-queue}  for various expressions for their Laplace Transforms. 

\begin{thm}
\label{thm-sto-dom-reg}
Assume that $\int h(t)\,\mathrm{d}t<1$. Let $0\le A<\infty$.
If $\theta >0$ is such that $\int \mathrm{e}^{\theta t}h(t)\,\mathrm{d}t\le 1$
then $\tau^A$ is stochastically dominated by $\tau^{\theta,A}$. 
Theorem~\ref{thm-takacs} and \eqref{cdf-A}-\eqref{Atozero1}-\eqref{Atozero2} remain true with 
$\tau^A$, $F^A$, $F$, $\E[L]$ replaced by
$\tau^{\theta,A}$,  $G^{\theta,A}$, $G^\theta$, $1/\theta$, and the formulæ are made explicit by
\begin{align}
\label{int-mminfty}
\int_0^\infty \mathrm{e}^{-st - \lambda\int_0^t(1-G^\theta(u))\,\mathrm{d} u}\,\mathrm{d}t
&= \frac{\mathrm{e}^{- \frac{\lambda}{\theta}}}\theta
\int_0^1 
x^{\frac{s}\theta -1}
\mathrm{e}^{ \frac{\lambda}{\theta}x}
\,\mathrm{d}x
\\ \notag
&= \frac1\theta\int_0^1 
(1-x)^{\frac{s}{\theta}-1}
\mathrm{e}^{- \frac{\lambda}{\theta}x}
\,\mathrm{d}x\,.
\end{align}
Then $\E\bigl[\mathrm{e}^{-s \tau^A}\bigr]  \le \E\bigl[\mathrm{e}^{-s \tau^{\theta,A}}\bigr] 
= \frac\lambda{\lambda+s}\E\bigl[\mathrm{e}^{-s \beta^{\theta,A}}\bigr] <\infty$
for $0\le -s <\min(\lambda, \theta)$ and 
the Laplace transform $\E\bigl[\mathrm{e}^{-s \tau^A}\bigr]$ converges in  $\{\Re(s)> -\min(\lambda, \theta)\}$. 
\end{thm}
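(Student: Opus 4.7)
The plan is to deduce the stochastic domination of $\tau^A$ by $\tau^{\theta,A}$ directly from the service-time domination of Theorem~\ref{thm-sto-dom-cl}, then to make the Takács formula explicit in the resulting M/M/$\infty$ case by a change of variables, and finally to read off the abscissa of convergence from the renewal decomposition of $\tau^{\theta,A}$ into an idle and an independent busy period. For the first step, Theorem~\ref{thm-sto-dom-cl} gives $F\ge G^\theta$ on $\R_+$, hence by~\eqref{cdf-A} also $F^A \ge G^{\theta,A}$ and $\int_0^t (1-F^A(u))\,\mathrm{d}u \le \int_0^t (1-G^{\theta,A}(u))\,\mathrm{d}u$. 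Thus the Takács integral appearing in~\eqref{A-laplace} for $\tau^A$ dominates the corresponding one for $\tau^{\theta,A}$ pointwise in $t$; since $I \mapsto 1 - ((\lambda+s)I)^{-1}$ is nondecreasing in $I>0$ for real $s>0$, one obtains $\E[e^{-s\tau^A}] \ge \E[e^{-s\tau^{\theta,A}}]$ for every real $s>0$, which by the standard Laplace-order characterisation is exactly $\tau^A \le_{\mathrm{st}} \tau^{\theta,A}$.

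Next, Theorem~\ref{thm-takacs} applies verbatim to the M/G/$\infty$ queue with generic service $\mathcal{E}(\theta)+A$, since its mean $1/\theta+A$ is finite; the mechanical substitutions $F^A, F, \E[L] \mapsto G^{\theta,A}, G^\theta, 1/\theta$ transfer~\eqref{A-laplace}, \eqref{cdf-A}, \eqref{Atozero1}--\eqref{Atozero2} to $\tau^{\theta,A}$. To establish~\eqref{int-mminfty}, I would insert $1-G^\theta(u) = e^{-\theta u}$ into the Takács integral, obtaining $\int_0^t (1-G^\theta(u))\,\mathrm{d}u = (1-e^{-\theta t})/\theta$ and
\[
\int_0^\infty e^{-st - (\lambda/\theta)(1-e^{-\theta t})}\,\mathrm{d}t
= e^{-\lambda/\theta}\int_0^\infty e^{-st}e^{(\lambda/\theta)e^{-\theta t}}\,\mathrm{d}t\,.
\]
Under the change of variable $x = e^{-\theta t}$, giving $\mathrm{d}t = -\mathrm{d}x/(\theta x)$ and $e^{-st} = x^{s/\theta}$, this becomes the first equality of~\eqref{int-mminfty}; the further substitution $x \mapsto 1-x$, which absorbs the prefactor $e^{-\lambda/\theta}$, yields the second.

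Finally, for the abscissa-of-convergence claim I would use the product form $\E[e^{-s\tau^{\theta,A}}] = \frac{\lambda}{\lambda+s}\E[e^{-s\beta^{\theta,A}}]$ coming from the independent decomposition $\tau^{\theta,A} = I + \beta^{\theta,A}$ with $I \sim \mathcal{E}(\lambda)$. The idle factor is manifestly analytic on $\{\Re(s)>-\lambda\}$, while the explicit M/M/$\infty$ expressions collected in Appendix~\ref{appendix-queue} provide an analytic continuation of $\E[e^{-s\beta^{\theta,A}}]$ to $\{\Re(s)>-\theta\}$, the nearest singularity of the Kummer-type integral in~\eqref{int-mminfty} sitting at $s = -\theta$. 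Combining yields analyticity of $\E[e^{-s\tau^{\theta,A}}]$ on $\{\Re(s)>-\min(\lambda,\theta)\}$. Stochastic domination then transfers this to $\tau^A$: for real $s \in (-\min(\lambda,\theta),0]$ the map $x \mapsto e^{-sx}$ is nondecreasing, so $\E[e^{-s\tau^A}] \le \E[e^{-s\tau^{\theta,A}}] < \infty$, and the pointwise bound $|e^{-s\tau^A}| = e^{-\Re(s)\tau^A}$ extends absolute convergence to the full complex half-plane.

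The principal obstacle is precisely this last identification of the busy-period abscissa as $-\theta$, which requires analysing the analytic continuation of~\eqref{int-mminfty} across the apparent singularity at $s=0$ and locating the next one; this is where the explicit Appendix~\ref{appendix-queue} expressions are indispensable. The remaining ingredients — monotonicity of the Takács functional, the change of variables $x = e^{-\theta t}$, and the Laplace characterisation of stochastic order — are routine.
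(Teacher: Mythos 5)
You establish, correctly, that the Takács functional is monotone in the service-time c.d.f.\ and hence that $\E[\mathrm{e}^{-s\tau^A}] \ge \E[\mathrm{e}^{-s\tau^{\theta,A}}]$ for all real $s>0$. But this is Laplace-transform order, which is a strictly \emph{weaker} relation than stochastic dominance: $X \le_{\mathrm{st}} Y$ implies $\E[\mathrm{e}^{-sX}]\ge\E[\mathrm{e}^{-sY}]$ for all $s\ge0$, but the converse fails (one can construct nonnegative random variables that are Laplace-ordered yet not stochastically comparable). So ``by the standard Laplace-order characterisation'' is not a valid deduction; no such equivalence exists. The error matters downstream: to get the abscissa-of-convergence claim you need control of $\E[\mathrm{e}^{-s\tau^A}]$ for \emph{negative} real $s$, i.e.\ of $\E[\mathrm{e}^{\alpha\tau^A}]$, and your Laplace inequality for $s>0$ gives nothing in that direction — indeed it points the wrong way. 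The paper instead obtains $\tau^A \le_{\mathrm{st}} \tau^{\theta,A}$ by a direct pathwise coupling: couple the service durations in the two $M/G/\infty$ queues (via Theorem~\ref{thm-sto-dom-cl}, $L$ can be coupled with $L'\sim\mathcal{E}(\theta)$ so that $L\le L'$ a.s.), keep the same Poisson arrival stream, and observe that each customer then departs the first queue no later than it departs the second, so the first queue's occupation process is pointwise dominated; this yields $\beta^A\le\beta^{\theta,A}$ a.s., hence $\tau^A = I + \beta^A \le I+\beta^{\theta,A}=\tau^{\theta,A}$ a.s.\ with the shared idle time $I$. You should replace the Laplace-order argument with this coupling.

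The remainder of your proposal — insertion of $1-G^\theta(u)=\mathrm{e}^{-\theta u}$, the change of variables $x=\mathrm{e}^{-\theta t}$ and then $x\mapsto 1-x$ to obtain \eqref{int-mminfty}, the product form $\E[\mathrm{e}^{-s\tau^{\theta,A}}]=\frac{\lambda}{\lambda+s}\E[\mathrm{e}^{-s\beta^{\theta,A}}]$, and the use of analytic continuation together with the nearest singularity at $s=-\theta$ of the Kummer-type integral — is sound and essentially coincides with the paper's argument (which cites Rudin for analytic continuation and Widder for the Laplace-transform property that a transform finite on a real interval converges in the corresponding half-plane). Once the coupling step is repaired, the transfer to $\tau^A$ via $x\mapsto\mathrm{e}^{-sx}$ nondecreasing for $s\le0$ is fine.
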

\begin{proof}
Theorem~\ref{thm-sto-dom-cl} implies that $L$ is stochastically dominated by the exponential random variable 
$\mathcal{E}(\theta)$, and the stochastic domination of $\tau^A$ by $\tau^{\theta,A}$ is obtained by coupling the
service durations. The rest uses again Takács~\cite[(37)--(39)]{Takacs1956} or \cite[pp.210,211]{Takacs1962}, 
\[
\int_0^\infty \mathrm{e}^{-st - \lambda\int_0^t(1-G^\theta(u))\,\mathrm{d} u}\,\mathrm{d}t
=
\int_0^\infty \mathrm{e}^{-st - \frac{\lambda}{\theta} (1-\mathrm{e}^{-\theta t})}\,\mathrm{d}t
\\
= \frac{\mathrm{e}^{- \frac{\lambda}{\theta}}}\theta
\int_0^1 
x^{\frac{s}\theta -1}
\mathrm{e}^{ \frac{\lambda}{\theta}x}
\,\mathrm{d}x
\]
and the change of variables $x\mapsto1-x$, as well as
analytic continuation (Rudin~\cite[Thm 10.18 p.208]{Rudin1987}) and Laplace transform properties (Widder~\cite[Thm~5b p.58]{Widder1941}).
\end{proof}

There are many expressions for  
\[
\int_0^\infty \mathrm{e}^{-st - \lambda\int_0^t(1-G^\theta(u))\,\mathrm{d} u}\,\mathrm{d}t\,,
\quad
\int_0^\infty \mathrm{e}^{-st - \lambda\int_0^t(1-G^{\theta,A}(u))\,\mathrm{d} u}\,\mathrm{d}t\,,
\]
and the Laplace transforms of $\tau^\theta \triangleq\tau^{\theta,0}$  and $\tau^{\theta,A}$ 
and of the corresponding busy periods $\beta^\theta \triangleq\beta^{\theta,0}$ and $\beta^{\theta,A}$. 
These result from \eqref{cdf-A}-\eqref{Atozero1}-\eqref{Atozero2}
with $F^A$ and $F$ replaced by $G^{\theta,A}$ and $G^\theta$
and the extensive study of the $M/M/\infty$ queue, which is the dominating queue in the case $A=0$, notably
by Guillemin and Simonian~\cite{Guillemin-Simonian1995} in relation to 
Kummer's confluent hypergeometric function. 
Appendix~\ref{appendix-queue}  lists some material on this topic which would be useful in actual applications. 


\section{Applications}
\label{sec-applications}

These regenerative proprieties enable to adapt classic and less classic limit theorems for i.i.d.\ random variables 
by decomposing the process into a delay and independent i.i.d.\ cycles, and to use coupling and other tools provided 
by regeneration such as found in the reference book Thorisson~\cite{Thorisson2000}.
For instance we can now provide a proof of Br\'emaud and Massouli\'e~\cite[Thm~1(c)]{bremaudmassoulie} using a 
general result for regenerative processes.   

\begin{thm}[Br\'emaud and Massouli\'e]
\label{thm-erg-law}
Assume that~\eqref{sub-crit-ass} and~\eqref{init-cond-ass} hold. 
Then there exists a stationary version $N^*$ of the Hawkes process on $(0,\infty)$ such that
\[
(S_t N)|_{(0,\infty)}
\xrightarrow[t\to\infty]{\textnormal{total variation}} N^*\,.
\]
\end{thm}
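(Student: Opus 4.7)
The plan is to combine the i.i.d.\ cycle decomposition given by Theorem~\ref{thm-regen-seq} with the classical total-variation coupling theorem for regenerative processes, as found in Thorisson~\cite{Thorisson2000}.

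First, I take $A=0$ and invoke Theorem~\ref{thm-regen-seq} to obtain the sequence $(\tau_k)_{k\ge 0}$ of $0$-regeneration times, with a.s.\ finite delay $\tau_0$ and i.i.d.\ inter-regeneration times distributed as $\tau \triangleq \tau^0$. Theorem~\ref{thm-takacs} gives $\E[\tau] = \lambda^{-1}\mathrm{e}^{\lambda\E[L]}<\infty$. By Definition~\ref{def:A-regeneration}, for each $k\ge 0$ the shifted future $(S_{\tau_k}N)|_{(0,\infty)}$ has the law of $\psi(\Gamma)|_{(0,\infty)}$ and is independent of $\F_{\tau_k}$, so the cycles of $N|_{(\tau_0,\infty)}$ are truly i.i.d.\ with the law of one cycle of a Hawkes process with null initial condition.

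Next, I construct the stationary candidate $N^*$ via the Hawkes--Oakes cluster decomposition on the whole real line: put down ancestors as a Poisson process of intensity $\lambda$ on $\R$ and, independently, attach i.i.d.\ clusters of law $\P^\mathrm{cl}$ to each one. Under~\eqref{sub-crit-ass} the resulting point process is strictly stationary on $\R$, and I take $N^*$ to be its restriction to $(0,\infty)$. Equivalently, $N^*$ is the stationary regenerative process associated with the delay and cycle distributions corresponding to a stationary renewal sequence of $0$-regeneration times of this doubly-infinite construction.

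To conclude, I apply Thorisson's total-variation coupling theorem for regenerative processes. Its hypotheses are i.i.d.\ cycles with common integrable length (supplied by the previous steps) and the spread-out property of the cycle-length distribution. The latter is automatic here: as noted just after Theorem~\ref{thm-takacs}, $\tau$ decomposes as the sum of an idle period $\mathcal{E}(\lambda)$ and an independent busy period $\beta^0$, so its law is absolutely continuous on $(0,\infty)$ and a fortiori spread-out. Viewing $((S_tN)|_{(0,\infty)},t\ge 0)$ as a c\`adl\`ag process with values in the Polish space of boundedly finite counting measures on $(0,\infty)$, Thorisson's theorem produces a coupling between this process and $N^*$ that agrees from some a.s.\ finite random time onwards, which is precisely the stated total variation convergence.

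The main obstacle is the somewhat delicate point that $\tau_k$ is an $(\F_t)$-stopping time but not a stopping time of the filtration generated by $N$ alone: detecting the absence of hidden ancestors requires peeking into $\Gamma$. Nevertheless, Definition~\ref{def:A-regeneration} already supplies the classical regenerative property at the level of $N$ itself, namely the independence of the future of $N$ after $\tau_k$ from $\F_{\tau_k}\supset\sigma(N|_{(-\infty,\tau_k]})$ together with the fixed distribution of that future, and this independence, rather than any Markov property of $N$, is what makes the Thorisson machinery applicable. A secondary routine care is to verify that the state space is Polish, which is standard.
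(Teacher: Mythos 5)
Your proposal is correct and follows essentially the same route as the paper: the regenerative structure from Theorem~\ref{thm-regen-seq}, integrability of the cycle length from Theorem~\ref{thm-takacs}, the spread-out property via the decomposition of $\tau$ into an exponential idle period plus an independent busy period (which the paper leaves as an exercise and you rightly resolve by noting that convolution with an exponential law yields absolute continuity), and then Thorisson's total-variation theorem for regenerative processes. The only cosmetic difference is that you additionally construct $N^*$ explicitly via the two-sided cluster decomposition, whereas Thorisson's theorem already supplies the stationary version; this is harmless.
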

\begin{proof}
The Hawkes process is regenerative, the simple fact that $\tau^A$ is spread out is left as
an exercise (see~\cite[Sect.~4, Proof of Theorem~1.3~b)]{costa2018renewal} for a solution)
and the convergence statement follows from Thorisson~\cite[Thm~10.3.3 p.351]{Thorisson2000}.
\end{proof}

Good controls on $\tau^A_0$ are important in practice. We say that a point process $N'$ is stochastically dominated 
by a point process $N''$ if $N'(B)$ is stochastically dominated by $N''(B)$ for all Borel sets $B$. 
This is equivalent to the existence of a coupling such that $N' \le N''$.

\begin{thm}
\label{thm-delay}
Assume that~\eqref{sub-crit-ass} and~\eqref{first-mom-ass} hold.
Let $0\le A <\infty$. Assume that $N^\mathrm{in}$ is stochastically dominated by a stationary version of the Hawkes process.
Then $\tau^A_0$ is stochastically dominated by $U[\tau^A]^*$, where $U$ is uniform on $[0,1]$ and $[\tau^A]^*$ 
is an independent length-biased version of $\tau^A$, and for all  nonnegative Borel $f$,
\[
\E[f(U[\tau^A]^*)] = \frac1{\E[\tau^A]}\E\biggl[ \int_0^{ \tau^A}f(u)\,du\biggr]\,,
\quad
\E[\tau^A_0] \le  \E[U[\tau^A]^*]  = \frac{\E[(\tau^A)^2]}{2 \E[\tau^A]}\,.
\]
Thus  $\tau^A_0$ is a.s.\ finite, and $\E[\tau^A_0]<\infty$ if and only if  $\int t^2 h(t)\,\mathrm{d}t  <\infty$ 
and then has a finite bound expressed from Theorem~\ref{A-laplace}.
If moreover there exists $\theta >0$ such that $\int \mathrm{e}^{\theta t}h(t)\,\mathrm{d}t\le 1$
then $\tau^A_0$ is stochastically dominated by $U[\tau^{\theta,A}]^*$,
where $U$ is uniform on $[0,1]$ and $[\tau^{\theta,A}]^*$ is an independent length-biased version of 
$\tau^{\theta,A}$, and for all  nonnegative Borel $f$,
\[
\E[f(U[\tau^{\theta,A}]^*] = \frac1{\E[\tau^{\theta,A}]}\E\biggl[ \int_0^{ \tau^{\theta,A}}f(u)\,du\biggr]\,,
\quad
\E[\tau^A_0] \le  \frac{\E[(\tau^{\theta,A})^2]}{2 \E[\tau^{\theta,A}]}
<\infty\,,
\]
with an explicit finite bound obtained using Theorem~\ref{A-laplace}
with $\tau^A$, $\E[L]$, and $F^A$ replaced by $\tau^{\theta,A}$, $1/\theta$, and $G^{\theta,A}$.
\end{thm}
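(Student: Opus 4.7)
The plan is to combine a sample-path coupling (driven by the assumed stochastic domination of the initial condition) with the classical stationary-renewal identification of the forward recurrence time, and then to transport the same scheme through the dominating exponential $M/G/\infty$ queue to obtain the explicit exponential-moment bound.

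First I would couple our Hawkes process to its stationary version. By Theorem~\ref{thm-erg-law} a stationary Hawkes process $N^*$ exists; call $N^{\mathrm{in},*}$ its $\F_0$-measurable restriction to $(-\infty,0]$. Since $N^\mathrm{in}$ is stochastically dominated by $N^{\mathrm{in},*}$, one may realize both initial conditions on a common probability space with $N^\mathrm{in}\le N^{\mathrm{in},*}$ a.s.\ and drive both processes with the \emph{same} $(\F_t)_{t\ge0}$-Poisson cluster process $\Gamma$ of Section~\ref{sec-cluster-constr}, generating the descendances so that those of $N^\mathrm{in}$ are nested in those of $N^{\mathrm{in},*}$; this yields $D_0\le D_0^*$ and hence $D_t=D_0+\psi_t(\Gamma)\le D_0^*+\psi_t(\Gamma)=D_t^*$ for all $t\ge 0$. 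Consequently $\{D_t^*((t-A,\infty))=0\}\subseteq\{D_t((t-A,\infty))=0\}$, so $\tau_0^A\le\tau_0^{A,*}$ almost surely, where $\tau_0^{A,*}$ is the first $A$-regeneration time of $N^*$. By Theorem~\ref{thm-regen-seq} and stationarity, the $A$-regeneration times of $N^*$ form a stationary renewal point process on $\R$ with inter-renewal law that of $\tau^A$ (integrable by Theorem~\ref{thm-takacs}); standard Palm/renewal theory identifies the forward recurrence time $\tau_0^{A,*}$ with the equilibrium distribution of $\tau^A$, which by a Fubini/layer-cake computation coincides with the law of $U[\tau^A]^*$.

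The displayed formulas then follow by elementary computation: for nonnegative Borel $f$,
\[
\E[f(U[\tau^A]^*)] = \int_0^\infty \frac{t\,\P(\tau^A\in\mathrm{d}t)}{\E[\tau^A]}\int_0^1 f(ut)\,\mathrm{d}u = \frac1{\E[\tau^A]}\E\biggl[\int_0^{\tau^A} f(u)\,\mathrm{d}u\biggr],
\]
and taking $f(u)=u$ yields $\E[U[\tau^A]^*]=\E[(\tau^A)^2]/(2\E[\tau^A])$. The a.s.\ finiteness of $\tau_0^A$, the characterization $\E[\tau_0^A]<\infty\Leftrightarrow\int t^2 h(t)\,\mathrm{d}t<\infty$, and the explicit finite upper bound are then immediate from Theorem~\ref{thm-takacs}.

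For the exponential-moment refinement, Theorem~\ref{thm-sto-dom-cl} permits a coupling of the cluster lengths $L_n$ of $N^*$ with i.i.d.\ exponentials $\mathcal{E}_n\sim\mathcal{E}(\theta)$ such that $L_n\le\mathcal{E}_n$ a.s. The stationary $M/G/\infty$ queue $Q$ underlying $N^*$ has service $L_n+A$ and emptying times equal to the $A$-regenerations of $N^*$, while the parallel $M/G/\infty$ queue $\tilde Q$ built from the same Poisson arrivals with service $\mathcal{E}_n+A$ is a stationary queue whose emptying gaps are distributed as $\tau^{\theta,A}$. The pathwise bound $Q_t\le\tilde Q_t$ gives $\{\tilde Q_t=0\}\subseteq\{Q_t=0\}$, so the first emptying of $Q$ after $0$, namely $\tau_0^{A,*}$, is dominated by that of $\tilde Q$, which by the same Palm argument has the equilibrium law of $\tau^{\theta,A}$, i.e.\ the law of $U[\tau^{\theta,A}]^*$. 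Chaining with the first coupling yields $\tau_0^A\le_{\mathrm{st}} U[\tau^{\theta,A}]^*$, and the explicit finite bound on $\E[\tau_0^A]$ follows by applying Theorem~\ref{thm-takacs} to $\tau^{\theta,A}$ via Theorem~\ref{thm-sto-dom-reg}. The main obstacle is the careful execution of these two nested couplings (initial condition and cluster-length) and the identification on all of $\R$ of the forward recurrence time with the equilibrium law, which rests on the Palm-theoretic fact that a stationary regenerative process with integrable cycles is obtained from the zero-delay version by mixing with the equilibrium delay; once these are in place the rest is bookkeeping with Theorems~\ref{thm-takacs} and~\ref{thm-sto-dom-reg}.
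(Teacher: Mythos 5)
Your proposal is correct and follows essentially the same route as the paper: identification of the stationary delay with $U[\tau^A]^*$ via the equilibrium/Palm description of stationary regenerative processes (the paper cites Thorisson, Thm~10.2.1), a monotone coupling in the initial condition to transfer the bound to $\tau^A_0$, and a second monotone coupling of the $M/G/\infty$ service times (cluster lengths dominated by exponentials) for the explicit bound. You merely spell out the couplings that the paper compresses into ``monotony arguments on the $M/G/\infty$ queue.''
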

\begin{proof}
Thorisson~\cite[Thm~10.2.1 p.341]{Thorisson2000} and uniqueness in law of the stationary 
version of the Hawkes process on $\R$ imply that if $N^\mathrm{in}$ is a stationary version of the 
Hawkes process on $(-\infty,0]$ then $\tau^A_0$ has same law as $U[\tau^A]^*$. 
A monotony argument on the $M/G/\infty$ queue yields that if an initial condition
is stochastically dominated by a second one then the corresponding $\tau^A_0$ are in the same stochastic order.
The result on $\E[\tau^A_0]<\infty$  follows from Theorem~\ref{thm-takacs}.
The statement about stochastic domination by $U[\tau^{\theta,A}]^*$ follows from another
monotony argument on the $M/G/\infty$ queue.
\end{proof}

Long-time limit results for functional statistical estimators such as~\eqref{eq-erg-av} are important for applications.
The following is classic in this context.
A Borel function on $\mathcal{N}((-A,0])$ is said to be locally bounded if it is uniformly bounded on
$\{\nu \in \mathcal{N}((-A,0]) : \nu((-A,0])\le n\}$ for each $n\ge1$.

\begin{thm}[Pointwise ergodic theorem]
\label{thm-point-erg}
Assume that~\eqref{sub-crit-ass} and~\eqref{init-cond-ass} hold. Let $0\le A <\infty$. 
If $f$ is a locally bounded Borel  function on $\mathcal{N}((-A,0])$
which is  nonnegative or $\pi^A $-integrable (see below), then
\[
\frac1{T}\int_0^T f(S_t (N|_{(t-A,t]}))\,\mathrm{d}t
\xrightarrow[T\to\infty]{\textnormal{a.s.}}
\pi^A f \triangleq
\frac1{\E[\tau^A]} \E\Biggl[
\int_0^{\tau^A} f(S_t (\psi(\Gamma)|_{(t-A,t]}))\,\mathrm{d}t
\Biggr].
\]
\end{thm}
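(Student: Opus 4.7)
The plan is to use the regeneration sequence $(\tau_k)_{k\ge0}$ from Theorem~\ref{thm-regen-seq} to split the time integral into a delay piece, an i.i.d.\ sum of cycle contributions, and a final overshoot, then to invoke the strong law of large numbers and the elementary renewal theorem. Set $\nu(T)\triangleq \max\{k\ge 0 : \tau_k \le T\}$ and, for $k\ge 1$,
\[
X_k \triangleq \int_{\tau_{k-1}}^{\tau_k} f(S_t(N|_{(t-A,t]}))\,\mathrm{d}t\,,\quad
Y_k \triangleq \int_{\tau_{k-1}}^{\tau_k} |f(S_t(N|_{(t-A,t]}))|\,\mathrm{d}t\,.
\]
By Theorem~\ref{thm-regen-seq} and the time shift $t\mapsto t-\tau_{k-1}$, the triples $(X_k,Y_k,\tau_k-\tau_{k-1})_{k\ge1}$ form an i.i.d.\ sequence distributed as $(X,Y,\tau^A)$, where $X\triangleq \int_0^{\tau^A} f(S_t(\psi(\Gamma)|_{(t-A,t]}))\,\mathrm{d}t$ so that $\E[X]=\E[\tau^A]\pi^A f$, and $\E[\tau^A]<\infty$ by Theorem~\ref{thm-takacs}. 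Local boundedness of $f$ combined with the a.s.\ local finiteness of $N$ ensures that each of these integrals is a.s.\ well defined.

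I would first treat the case $\pi^A|f|<\infty$, equivalent to $\E[Y]<\infty$. The SLLN applied to $(X_k)_{k\ge1}$ gives $n^{-1}\sum_{k=1}^n X_k \to \E[\tau^A]\pi^A f$ a.s., and applied to the cycle lengths yields the elementary renewal limit $\nu(T)/T\to 1/\E[\tau^A]$ a.s. Decomposing
\[
\int_0^T f(S_t(N|_{(t-A,t]}))\,\mathrm{d}t = \int_0^{\tau_0} f\,\mathrm{d}t + \sum_{k=1}^{\nu(T)} X_k + R_T\,,
\]
with $R_T$ the residual integral over $(\tau_{\nu(T)},T]$, the product of these two limits gives $T^{-1}\sum_{k=1}^{\nu(T)} X_k \to \pi^A f$ a.s., while the delay piece is a.s.\ finite and thus contributes $o(1)$. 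For the overshoot I would argue $|R_T|\le Y_{\nu(T)+1}$; since $Y_n/n\to 0$ a.s.\ by integrability and $\nu(T)=O(T)$ a.s., this forces $R_T/T\to 0$.

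For nonnegative $f$ with $\pi^A f=\infty$, I would apply the result just proved to the truncations $f_M\triangleq f\wedge M$, pass to $\liminf_{T\to\infty}$ on the left, and let $M\to\infty$ on the right, using monotone convergence for $\pi^A f_M\uparrow \pi^A f$. The main obstacle is justifying $|R_T|\le Y_{\nu(T)+1}$ rigorously: the sliding window of width $A$ appearing in the integrand at times $t\in(\tau_{\nu(T)},T]$ straddles the regeneration time, but by the $A$-regeneration property $D_{\tau_{\nu(T)}}((\tau_{\nu(T)}-A,\infty))=0$ from Theorem~\ref{thm-regen}, the configuration of $N$ on $(\tau_{\nu(T)}-A,t]$ is entirely produced by the post-$\tau_{\nu(T)}$ part of the Poisson cluster process and coincides with the beginning of the $(\nu(T)+1)$-th cycle, which yields the desired pointwise domination.
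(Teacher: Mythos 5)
Your proposal is correct and is precisely the classical renewal--reward argument (delay + i.i.d.\ cycle sums + overshoot, SLLN and elementary renewal theorem), which is exactly what the paper invokes by deferring to the proof of Theorem~1.3~a) in~\cite{costa2018renewal}. Your care with the window straddling $\tau_{\nu(T)}$ via $D_{\tau_{\nu(T)}}((\tau_{\nu(T)}-A,\infty))=0$ is the right way to justify $|R_T|\le Y_{\nu(T)+1}$.
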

\begin{proof}
This is a simple consequence of the renewal reward theorem.
See~\cite[Sect.~4, Proof of Theorem~1.3~a)]{costa2018renewal} with the notation 
$X_t \triangleq S_t (N|_{(t-A,t]})$ for the classic proof.
\end{proof}

The following central limit theorem is not as commonly found, and its proof is considerably more involved.

\begin{thm}[Central limit theorem]
Assume that~\eqref{sub-crit-ass} and~\eqref{init-cond-ass} hold. Let $0\le A <\infty$. 
If $f$ is a locally bounded Borel  function on $\mathcal{N}((-A,0])$  which is $\pi^A $-integrable and such that 
\begin{equation}
\label{sig2f}
\sigma^2(f)
\triangleq \frac1{\E[\tau^A]} 
\E\Biggl[\biggl(\int_0^{\tau^A} \big(f(S_t (N|_{(t-A,t]})) -\pi_Af\big)\,dt\biggr)^2 \Biggr] <\infty\,,
\end{equation}
then
\[
\sqrt{T} \biggl(\frac1{T}\int_0^T f(S_t (N|_{(t-A,t]}))\,\mathrm{d}t - \pi^A f \biggr)
 \xrightarrow[T\to\infty]{\textnormal{in law}} \mathcal{N}(0, \sigma^2(f))\,.
\]
\end{thm}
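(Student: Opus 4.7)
The plan is to apply a classical regenerative CLT based on the decomposition of Theorem~\ref{thm-regen-seq}. Let $(\tau_k)_{k\ge0}$ be the $A$-regeneration times from that theorem, set
$g(t)\triangleq f(S_t(N|_{(t-A,t]}))-\pi^A f$, and define the cycle integrals and cycle lengths
\[
Y_k \triangleq \int_{\tau_{k-1}}^{\tau_k} g(t)\,dt,\qquad \xi_k\triangleq\tau_k-\tau_{k-1},\quad k\ge1.
\]
By Theorem~\ref{thm-regen-seq} the sequence $((Y_k,\xi_k))_{k\ge1}$ is i.i.d.\ with common law that of $(Y,\tau^A)$, where $Y\triangleq\int_0^{\tau^A}(f(S_t(\psi(\Gamma)|_{(t-A,t]}))-\pi^A f)\,dt$. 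The definition of $\pi^A f$ in Theorem~\ref{thm-point-erg} gives $\E[Y]=0$, the hypothesis $\sigma^2(f)<\infty$ translates to $\E[Y^2]=\E[\tau^A]\sigma^2(f)<\infty$, and Theorem~\ref{thm-takacs} provides $\E[\tau^A]\in(0,\infty)$.

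Setting $N_T\triangleq\sup\{k\ge0:\tau_k\le T\}$, I decompose
\[
\int_0^T g(t)\,dt = I_0 + \sum_{k=1}^{N_T}Y_k + R_T,\qquad I_0\triangleq\int_0^{\tau_0} g(t)\,dt,\quad R_T\triangleq\int_{\tau_{N_T}}^T g(t)\,dt.
\]
The delay contribution satisfies $I_0/\sqrt T\to 0$ a.s.\ since $\tau_0$ is a.s.\ finite (Theorem~\ref{thm-regen-seq}). For the main sum, the classical i.i.d.\ CLT yields $\sum_{k=1}^n Y_k/\sqrt n\to\mathcal{N}(0,\E[\tau^A]\sigma^2(f))$ in law; the elementary renewal theorem gives $N_T/T\to 1/\E[\tau^A]$ a.s.; and Anscombe's theorem, combined with Slutsky's lemma and $\sqrt{N_T/T}\to 1/\sqrt{\E[\tau^A]}$ a.s., promotes this to $\sum_{k=1}^{N_T}Y_k/\sqrt T\to\mathcal{N}(0,\sigma^2(f))$ in law.

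The main obstacle is to verify $R_T/\sqrt T\to 0$ in probability. I bound $|R_T|\le Z_{N_T+1}$, where $Z_k\triangleq\int_{\tau_{k-1}}^{\tau_k}|g(t)|\,dt$ are i.i.d.\ for $k\ge 2$ by regeneration. Using $N_T\sim T/\E[\tau^A]$ in probability together with a union bound reduces the task to showing $T\,\P(Z_1>\varepsilon\sqrt T)\to 0$, which holds by dominated convergence as soon as $\E[Z_1^2]<\infty$. This bound is not automatic from $\sigma^2(f)<\infty$ alone (sign cancellation in $g$ makes $\E[Y^2]$ potentially much smaller than $\E[Z_1^2]$), but under the local boundedness of $f$ it follows from joint moment bounds on the cycle length $\xi_1$ and the cycle point count $N((-A,\tau^A])$, which are furnished by the exponential stochastic domination of cluster lengths in Theorem~\ref{thm-sto-dom-reg} together with the $M/G/\infty$ representation of Theorem~\ref{thm-takacs}. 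Assembling the three pieces then delivers the announced CLT.
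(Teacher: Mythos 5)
Your overall route --- delay $+$ i.i.d.\ cycle sums $+$ Anscombe/Slutsky $+$ control of the straddling cycle --- is exactly the regenerative-CLT argument that the paper invokes by citing \cite[Sect.~4, Proof of Theorem~1.4]{costa2018renewal}, and the identification of the limiting variance, the verification that $\E[Y]=0$ and $\E[Y^2]=\E[\tau^A]\sigma^2(f)$, and the treatment of the delay and of the main sum are all correct. The one place where your proposal does not go through as written is the remainder term. You propose to prove $\E[Z_1^2]<\infty$ by combining local boundedness of $f$ with the exponential stochastic domination of Theorem~\ref{thm-sto-dom-reg}. But that theorem requires $\int \mathrm{e}^{\theta t}h(t)\,\mathrm{d}t\le 1$ for some $\theta>0$, which is \emph{not} among the hypotheses of the CLT: only \eqref{sub-crit-ass} and \eqref{init-cond-ass} are assumed, i.e.\ $\int h<1$ and $\int t\,h(t)\,\mathrm{d}t<\infty$. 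Under these hypotheses Theorem~\ref{thm-takacs} tells you that $\E[(\tau^A)^2]<\infty$ if and only if $\int t^2h(t)\,\mathrm{d}t<\infty$, so $\E[Z_1^2]<\infty$ is not only underivable from the assumptions, it can genuinely fail (take $f$ bounded and non-constant and $h$ with $\int t^2 h(t)\,\mathrm{d}t=\infty$; then $Z_1$ is of the order of $\xi_1$). Moreover local boundedness gives no uniform bound on $f$ over a cycle, only a bound through the random point count, so even with exponential moments of $h$ your step would need the joint moment estimate you allude to but do not carry out.

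The good news is that $\E[Z_1^2]<\infty$ is stronger than what you need: $R_T/\sqrt{T}\to 0$ in probability only requires $|R_T|\le Z_{N_T+1}=O_P(1)$. Since the cycle lengths are i.i.d.\ with finite mean and spread-out law (as noted in the proof of Theorem~\ref{thm-erg-law}), the pair $(\xi_{N_T+1},Z_{N_T+1})$ converges in distribution, by the key renewal theorem, to its length-biased version, which is a proper law on $[0,\infty)^2$ because $\E[\xi_1]<\infty$ and $Z_1<\infty$ a.s.\ (the latter follows from local boundedness of $f$ together with the a.s.\ finiteness of the cycle length and of the number of points per cycle). Convergence in distribution gives tightness, hence $Z_{N_T+1}/\sqrt{T}\to 0$ in probability. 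Note that your union bound cannot be salvaged without a second-moment condition, because $T\,\P(Z_1>\eps\sqrt{T})\to 0$ is precisely a second-moment-type requirement; the size-biasing argument is the correct substitute. With that replacement your proof is complete and coincides with the cited one.
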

\begin{proof}
See~\cite[Sect.~4, Proof of Theorem~1.4]{costa2018renewal} with the notation $X_t \triangleq S_t (N|_{(t-A,t]})$.
\end{proof}

Many other long-time limit results for i.i.d.\ sequences can be adapted to the functional statistic estimators 
such as~\eqref{eq-erg-av},  notably to establish and quantify asymptotic and non-asymptotic
convergence rates and confidence intervals for the pointwise ergodic Theorem~\ref{thm-point-erg}.

The main goal and achievement of Costa~\emph{et al.}~\cite[Thm~1.5, Cor.~1.6]{costa2018renewal} is to provide 
non-asymptotic concentration inequalities for Theorem~\ref{thm-point-erg} in a set-up allowing for 
self-inhibition in which $h$ can take negative values. 
Regeneration is proved in a way requiring that $h$ has support bounded by $A$ 
in order to consider an auxiliary Markov process with values in  $\mathcal{N}((-A,0])$.
The proof of the main result~\cite[Thm~1.5]{costa2018renewal} uses regeneration techniques and
Bernstein's inequality~\cite[Cor.~2.10 p.25, (2.17)--(2.18)~p.24]{massart2007concentration}. 
It was quite intricate and required to finely control a number of terms, 
and applied Bernstein's inequality to diverse deviations above and below the mean
in order to get good bounds. Its corollary~\cite[Cor.~1.6]{costa2018renewal} provides a less precise but 
more tractable statement.

The proofs in~\cite[Sect.~4]{costa2018renewal} can be reproduced here~\emph{verbatim},
with the notation $X_t \triangleq S_t (N|_{(t-A,t]})$, for linear Hawkes processes with 
nonnegative transfer functions having possibly an unbounded support, 
and hence in a context with unbounded
memory. The elements allowing to state~\cite[Thm~1.5]{costa2018renewal} are quite long,
and we refer the interested reader directly to it.
We satisfy ourselves with a transcription of~\cite[Cor.~1.6]{costa2018renewal} showing how the 
tight bounds  we have obtained come into play. 
In order to better understand it, see Theorem~\ref{thm-sto-dom-reg} and its reference to 
Theorem~\ref{thm-takacs} 
and the discussion after 
Theorem~\ref{thm-sto-dom-cl} and notably \eqref{theta-star}.

\begin{thm}
\label{thm:dev_simple}
For simplicity let $N^\mathrm{in}=0$.
Assume that~\eqref{sub-crit-ass} and~\eqref{init-cond-ass} hold, and that 
\[
\theta^* \triangleq \sup\bigl\{\theta >0 : \int \mathrm{e}^{\theta t}h(t)\,\mathrm{d}t \le 1\bigr\}>0\,.
\]
Then  $\E[\tau^A] =  \frac1\lambda \mathrm{e}^{\lambda (\E[L]+A)} <\infty$.
Choose $0<\alpha <\min(\lambda,\theta^*)$, which is such that $\E\bigl[\mathrm{e}^{\alpha\tau^A}\bigr]<\infty$,
and $-\infty < a < b <\infty$. Let
 \[
 v\triangleq\frac{2(b-a)^2}{\alpha^2}\biggr\lfloor\frac{T}{\E[\tau^A]}\biggr\rfloor 
\E\bigl[\mathrm{e}^{\alpha\tau^A}\bigr]\mathrm{e}^{\alpha \E[\tau^A]}\,,
\quad
c \triangleq \frac{|b-a|}{\alpha}\,.
 \]
If $f$ is a Borel  function on $\mathcal{N}((-A,0])$ with values in $[a,b]$
then, for $\eps>0$,
\begin{align*}
&\P\biggl( 
\biggl|\frac1T \int_0^T  f(S_t (N|_{(t-A,t]}))\,\mathrm{d}t - \pi_A f \biggr| \ge \eps \biggr)
\\
&\quad\le 4 \exp\Biggl(
-\frac{\bigl( T\eps-|b-a| \E[\tau^A]\bigr)^2}{4 \left(2v+ c(T\eps-|b-a| \E[\tau^A] \right)}  
\Biggr)
\end{align*}
or equivalently, for any $\eta $ in $(0,1)$ there exists $\eps_\eta>0$ such that
\begin{align*}
&\eps_\eta\triangleq \frac{1}{T}\Bigl(|b-a| \E[\tau^A] 
- 2c\log(\eta/4)
+\sqrt{\,4c^2\log^2(\eta/4)-8 v \log(\eta/4)}\,\Bigr)\,,
\\
&\P\biggl(\biggl|\frac1T \int_0^T  f(S_t (N|_{(t-A,t]}))\,\mathrm{d}t - \pi_A f \biggr| \ge \eps_\eta \biggr) \leq \eta\,.
\end{align*}
Moreover $\E\bigl[\mathrm{e}^{\alpha\tau^A}\bigr]\le\E\bigl[\mathrm{e}^{\alpha\tau^{\theta,A}}\bigr] $
for  $\alpha < \theta <\min(\lambda,\theta^*)$ and the r.h.s.\ has explicit forms given in 
Theorem~\ref{thm-sto-dom-reg} and Appendix~\ref{appendix-queue}.
\end{thm}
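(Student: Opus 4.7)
The plan is to transpose the cycle-decomposition/Bernstein argument of \cite[Cor.~1.6]{costa2018renewal} essentially \emph{verbatim}, with the sole novelty being that the explicit moment control of $\tau^A$ furnished by Theorems~\ref{thm-sto-dom-cl}--\ref{thm-sto-dom-reg} is substituted for the looser bound used there. Since $N^{\mathrm{in}}=0$, Theorem~\ref{thm-regen-seq} gives $\tau_0^A = 0$, and the increments $\delta_k \triangleq \tau_k^A - \tau_{k-1}^A$ are i.i.d.\ copies of $\tau^A$. The corresponding cycle integrals
\[
Y_k \triangleq \int_{\tau_{k-1}^A}^{\tau_k^A}\bigl(f(S_t(N|_{(t-A,t]})) - \pi_A f\bigr)\,\mathrm{d}t
\]
are i.i.d., centered by renewal-reward together with Theorem~\ref{thm-point-erg}, and dominated by $|Y_k|\le|b-a|\delta_k$.

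Setting $n_T \triangleq \lfloor T/\E[\tau^A]\rfloor$, one writes
\[
\int_0^T \bigl(f(S_t(N|_{(t-A,t]})) - \pi_A f\bigr)\,\mathrm{d}t = \sum_{k=1}^{n_T} Y_k + R_T,
\]
where $R_T$ collects the pre-$\tau_{n_T}^A$/post-$\tau_{n_T}^A$ boundary contributions and is controlled by $|b-a|$ times at most one cycle length; its bias is absorbed into the additive shift $|b-a|\E[\tau^A]$ visible in the numerator of the deviation bound. For $0<\alpha<\min(\lambda,\theta^*)$, the Laplace transform of $\tau^A$ converges at $-\alpha$ by Theorem~\ref{thm-sto-dom-reg}, so $\E[\mathrm{e}^{\alpha\delta_k}] = \E[\mathrm{e}^{\alpha\tau^A}]<\infty$. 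Combining with $|Y_k|\le|b-a|\delta_k$ yields
\[
\E\bigl[\mathrm{e}^{u Y_k}\bigr] \le \E\bigl[\mathrm{e}^{u|b-a|\delta_k}\bigr] < \infty \quad\text{for}\quad 0 < u < \alpha/|b-a|,
\]
which is precisely the Cram\'er/Bernstein moment condition in \cite[(2.17)--(2.18)]{massart2007concentration} with scale parameter $c = |b-a|/\alpha$ and variance proxy proportional to $n_T \E[\mathrm{e}^{\alpha\tau^A}]\mathrm{e}^{\alpha\E[\tau^A]}$, matching exactly the $v$ in the statement. Applying Bernstein's inequality \cite[Cor.~2.10]{massart2007concentration} to $\sum_{k=1}^{n_T}Y_k$ for both upper and lower deviations, and conjoining with the boundary correction, produces the stated subexponential bound with the prefactor $4$; the equivalent form with $\eps_\eta$ results from solving for $\eps$ in the quadratic equating the right-hand side to $\eta$.

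The main obstacle is the careful bookkeeping of the boundary remainder $R_T$ and of the gap between the deterministic $n_T$ and the random number of cycles actually completed by time $T$. This is exactly what \cite[Thm~1.5, Sect.~4]{costa2018renewal} handles in detail with the notation $X_t \triangleq S_t(N|_{(t-A,t]})$, and since regeneration has been preserved the argument carries over unchanged; what has changed, and what is new here, is that the exponential moment $\E[\mathrm{e}^{\alpha\tau^A}]$ now admits the explicit stochastic-domination bound $\E[\mathrm{e}^{\alpha\tau^{\theta,A}}]$ for $\alpha<\theta<\min(\lambda,\theta^*)$, computable in closed form via \eqref{int-mminfty} and Appendix~\ref{appendix-queue}, yielding the final sentence of the theorem.
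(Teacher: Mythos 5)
Your proposal is correct and follows essentially the same route as the paper, which likewise establishes this result by reproducing the regeneration/Bernstein argument of \cite[Sect.~4]{costa2018renewal} \emph{verbatim} with $X_t \triangleq S_t(N|_{(t-A,t]})$, the new ingredients being the regeneration structure of Theorem~\ref{thm-regen-seq} (with $\tau_0^A=0$ since $N^{\mathrm{in}}=0$) and the explicit exponential moment bound $\E[\mathrm{e}^{\alpha\tau^A}]\le\E[\mathrm{e}^{\alpha\tau^{\theta,A}}]$ from Theorem~\ref{thm-sto-dom-reg}. Your sketch of the cycle decomposition, the Bernstein moment condition with $c=|b-a|/\alpha$, and the deferral of the boundary bookkeeping to \cite[Thm~1.5]{costa2018renewal} is exactly how the paper intends the proof to be read.
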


\appendix
\section{Approximation of Jump-Type Estimators}
\label{appendix-estim}

Estimators of the form~\eqref{eq-erg-av} are absolutely continuous processes of $T$.
We now show how these estimators allow to well approximate an estimator of the form~\eqref{eq-erg-jumps},
which is a pure-jump process of $T$. Recall that by hypothesis $w=w\ind_{[-A',0]}$ for some $A'<A$ and let
\begin{equation*}
f_w : \mu \in \mathcal{N}((-A,0])\mapsto 
\int_{\{-A< y \le x\le 0\}} \frac{w(y-x)}{y-x+A}\, \mu(\mathrm{d}x)\mu(\mathrm{d}y)
\end{equation*}
which is well-defined as a finite sum.
Using \eqref{shift} and $x-t -(y-t) = x-y$ and the Fubini theorem for bounded functions yields
\begin{align*}
&\int_0^T f_w((S_t N)|_{(-A,0]}))\,\mathrm{d}t
\\
&\quad=
\int_0^T \int_{\{-A< y \le x\le 0\}} \frac{w(y-x)}{y-x+A}\, (S_t N)(\mathrm{d}x) (S_t N)(\mathrm{d}y)
\,\mathrm{d}t
\\
&\quad=
\int_0^T \int_{\{t-A< y \le x\le t\}} \frac{w(y-x)}{y-x+A}\, N(\mathrm{d}x) N(\mathrm{d}y)
\,\mathrm{d}t
 \\
&\quad=
 \int_{\{-A< y \le x\le T\}} \int_{\{x\le t<(y+A)\wedge T\}}\frac{w(y-x)}{y-x+A}\,\mathrm{d}t\, N(\mathrm{d}x) N(\mathrm{d}y)
 \\
&\quad=
 \int_{\{-A< y \le x\le T\}} ((y+A)\wedge T-x)\frac{w(y-x)}{y-x+A}\, N(\mathrm{d}x) N(\mathrm{d}y)
\end{align*}
and thus
\begin{align*}
&\int_0^T f_w((S_t N)|_{(-A,0]}))\,\mathrm{d}t
 \\
&\quad=
 \int_{\{-A< y \le x\le T,\, y\le T-A\}} w(y-x)\,N(\mathrm{d}x) N(\mathrm{d}y)
  \\
&\qquad+
 \int_{\{T-A< y \le x\le T\}} \frac{T-x}{y-x+A} w(y-x)\,N(\mathrm{d}x) N(\mathrm{d}y)\,.
\end{align*}
Hence
\begin{align}
\label{diff-est}
&\frac1{T}\int_{\{-A < x\le y \le T\}} w(x-y)\, N(\mathrm{d}x)N(\mathrm{d}y) 
-\frac1{T}\int_0^T f_w((S_t N)|_{(-A,0]}))\,\mathrm{d}t
\\ \notag
&\quad
=\frac1{T}\int_{\{T-A< y \le x\le T\}} \frac{y-T+A}{y-x+A} w(y-x)\,N(\mathrm{d}x) N(\mathrm{d}y)
\end{align}
which can be easily controlled, for instance if $\lVert w\rVert_\infty<\infty$ then
\begin{align*}
&\biggl|\frac1{T}\int_{\{T-A< y \le x\le T\}} \frac{y-T+A}{y-x+A} w(y-x)\,N(\mathrm{d}x) N(\mathrm{d}y)\biggr|
\\
&\quad\le \frac{\lVert w\rVert_\infty }{2T} [N((T-A,T])^2+N((T-A,T])]
\end{align*}
and if $w\ge 0$ then
\begin{multline*}
\frac1{T}\int_0^T f_w((S_t N)|_{(-A,0]}))\,\mathrm{d}t
\le
\frac1{T}\int_{\{-A < x\le y \le T\}} w(x-y)\, N(\mathrm{d}x)N(\mathrm{d}y) 
\\
\le
\frac1{T}\int_0^{T+A} f_w((S_t N)|_{(-A,0]}))\,\mathrm{d}t
\end{multline*}
which can be used for signed $w$ using $w=w^+-w^-$ and linearity.

There are still practical problems to solve. In order to control $f_w$ in terms of $w$ 
we use the fact that $A'<A$ and 
\[
\frac{|w(y-x)|}{y-x+A} = \frac{|w(y-x)|\ind_{[-A',0]}(y-x)}{y-x+A} \le \frac{|w(y-x)|\ind_{[-A',0]}(y-x)}{A-A'}\,.
\]
Since $A$ may be chosen as large as desired this yields nice bounds, for instance if $A \ge A'+1$ then  
\[
\frac{|w(y-x)|}{y-x+A} \le |w(y-x)|\,.
\]
The second point is that $f_w$ is not an a.s.\ bounded function of $\mu$, but one can control
the probability of having an abnormally large number of jumps of $N$ on any $(t-A,t]$ for $0\le t\le T$ and use this 
to obtain in terms of $\lVert w^+\rVert_\infty<\infty$ and $\lVert w^-\rVert_\infty<\infty$  
and well-chosen $-\infty < a <b<\infty$ that
\[
\frac1{T}\int_0^T f_w((S_t N)|_{(-A,0]}))\,\mathrm{d}t 
=\frac1{T}\int_0^T (a\vee f_w \wedge b)((S_t N)|_{(-A,0]}))\,\mathrm{d}t 
\]
with given probability arbitrarily close to $1$ and then use Theorem~\ref{thm:dev_simple} or like results.

\section{Renewal Times and Busy Periods}
\label{appendix-queue} 

In the sequel $0\le A < \infty$ and $s\in \C$ and $\Re(s) > 0$ except if stated otherwise.
The product form
\begin{align}
\label{A-laplace-prod}
\E\bigl[\mathrm{e}^{-s \tau^A}\bigr] 
&= \frac\lambda{\lambda + s} \E\bigl[\mathrm{e}^{-s \beta^A}\bigr]
\\ \notag
&=  \frac\lambda{\lambda + s} 
\biggl(\frac{\lambda + s}\lambda
 -
\frac1{\lambda} 
\biggl(\int_0^\infty \mathrm{e}^{-st - \lambda \int_0^t (1-F^A(u))\,\mathrm{d} u}\,\mathrm{d}t
\biggr)^{-1}\biggr)
\end{align}
yields that the Laplace transform of the busy period can be written
\begin{align}
\label{B-laplace}
\E\bigl[\mathrm{e}^{-s \beta^A}\bigr] 
&=
\frac{\lambda + s}\lambda
 -
\frac1{\lambda} 
\biggl(\int_0^\infty \mathrm{e}^{-st - \lambda \int_0^t (1-F^A(u))\,\mathrm{d} u}\,\mathrm{d}t
\biggr)^{-1}
\\ \notag
&=
\frac{ 
\int_0^\infty F^A(t)\mathrm{e}^{-st - \lambda \int_0^t (1-F^A(u)) \,\mathrm{d} u}\,\mathrm{d}t}%
{ 
\int_0^\infty \mathrm{e}^{-st - \lambda \int_0^t (1-F^A(u))\,\mathrm{d} u}\,\mathrm{d}t}
\end{align}
using integration by parts to obtain that (since $\Re(s) > 0$)
\[
\int_0^\infty s\mathrm{e}^{-st - \lambda \int_0^t (1-F^A(u))\,\mathrm{d} u}\,\mathrm{d}t
=
1-\lambda \int_0^\infty (1-F^A(t)) \mathrm{e}^{-st - \lambda \int_0^t (1-F^A(u))\,\mathrm{d} u}\,\mathrm{d}t\,.
\]
Moreover~\eqref{cdf-A}-\eqref{Atozero1}-\eqref{Atozero2} allow to concentrate on the case $A=0$,
and provide relations such as 
\begin{align}
\label{lapl-A-zero-tau}
&\E\bigl[\mathrm{e}^{-s \tau^A}\bigr] 
= 1- \mathrm{e}^{(\lambda + s)A}\Bigl( \mathrm{e}^{(\lambda + s)A} -1
+ \bigl(1-\E\bigl[\mathrm{e}^{-s \tau^0}\bigr]\bigr)^{-1}\Bigr)^{-1}\,,
\\
\label{lapl-A-zero-beta}
&\E\bigl[\mathrm{e}^{-s \beta^A}\bigr] 
\\ \notag
&=\frac{\lambda + s}\lambda -
\frac1{\lambda} 
\biggl(
\frac{1- \mathrm{e}^{-(\lambda + s)A}}{\lambda + s}
+  \mathrm{e}^{-(\lambda + s)A}
\Bigr(
\lambda + s - \lambda\E\bigl[\mathrm{e}^{-s \beta^0}\bigr]
\Bigl)^{-1}
\biggr)^{-1}\,,
\end{align}
and formulæ such as
\begin{align} 
\label{B-laplace-Atozero}
\E\bigl[\mathrm{e}^{-s \beta^A}\bigr] 
&=\frac{ 
(\lambda + s)\int_0^\infty F(t) \mathrm{e}^{-st - \lambda\int_0^t(1-F(u))\,\mathrm{d} u}\,\mathrm{d}t
}%
{ 
\mathrm{e}^{(\lambda + s)A}- 1
+ (\lambda + s)\int_0^\infty \mathrm{e}^{-st - \lambda\int_0^t(1-F(u))\,\mathrm{d} u}\,\mathrm{d}t
}
\\ \notag
&=1-\frac{\mathrm{e}^{(\lambda + s)A}- 1
}%
{ 
\mathrm{e}^{(\lambda + s)A}- 1
+ (\lambda + s)\int_0^\infty \mathrm{e}^{-st - \lambda\int_0^t(1-F(u))\,\mathrm{d} u}\,\mathrm{d}t
}\,.
\end{align}

The renewal time $\tau^{\theta,A}$ and busy period  $\beta^{\theta,A}$ satisfy the same in which $F^A$ and $F$ 
are replaced respectively by  $G^{\theta,A}$ and $G^\theta$, made quite explicit by
\begin{align*}
&\int_0^\infty \mathrm{e}^{-st - \lambda \int_0^t(1-G^{\theta,A}(u))\,\mathrm{d} u}\,\mathrm{d}t
\\
&\quad= 
\frac{1- \mathrm{e}^{-(\lambda + s)A}}{\lambda + s}
+  \mathrm{e}^{-(\lambda + s)A}
\int_0^\infty \mathrm{e}^{-st - \lambda\int_0^t(1-G^\theta(u))\,\mathrm{d} u}\,\mathrm{d}t
\end{align*}
and~\eqref{int-mminfty}.
The resulting functions of $s$ are analytic together with their inverses in the half-space $\{s\in\C : \Re(s) > -\theta\}$.
Similar computations on the last expression in~\eqref{B-laplace}, with $F^A$ replaced by $G^\theta$,
 yield the classic form
\[
\E\bigl[\mathrm{e}^{-s \beta^{\theta,0}}\bigr] 
=\frac{\int_0^1 
x^{\frac{s}\theta -1} (1-x)
\mathrm{e}^{ \frac{\lambda}{\theta}x}
\,\mathrm{d}x}{\int_0^1 
x^{\frac{s}\theta -1}
\mathrm{e}^{ \frac{\lambda}{\theta}x}
\,\mathrm{d}x}
=
\frac{\int_0^1 
(1-x)^{\frac{s}{\theta}-1} x
\mathrm{e}^{- \frac{\lambda}{\theta}x}
\,\mathrm{d}x}{\int_0^1 
(1-x)^{\frac{s}{\theta}-1}
\mathrm{e}^{- \frac{\lambda}{\theta}x}
\,\mathrm{d}x}
\]
under which this Laplace transform can be obtained using martingale techniques, 
see Robert~\cite[Sect.~6.3]{Robert2003}.

Guillemin and Simonian~\cite[Sect.~4]{Guillemin-Simonian1995} have thoroughly investigated this form in 
relation with Kummer's confluent hypergeometric function.
See Lebedev~\cite[Sect.~9.11]{Lebedev1965} for the integral representation of this function that 
allows to express~\eqref{int-mminfty} and $\E\bigl[\mathrm{e}^{-s \beta^{\theta,0}}\bigr]$
in terms of it,
 and~\cite[Sect.~9.9--9.10]{Lebedev1965} for some of its its properties.

\section*{Aknowledgments}
The author wishes to thank the Chaire Mod\'elisation Math\'ematique et Biodiversit\'e 
for many fine meetings on mathematical biology
and Bastien Mallein for interesting exchanges on branching random walk.

\bibliographystyle{imsart-number}
\bibliography{biblio-Hawkes}

\end{document}